\let\nc\newcommand
\nc{\la}{\label}
\newtheorem{theorem}{Theorem}[section]
\newtheorem{definition}[theorem]{Definition}
\newtheorem{corollary}[theorem]{Corollary}
\newtheorem{lemma}[theorem]{Lemma}
\newtheorem{proposition}[theorem]{Proposition}
\newtheorem{example}[theorem]{Example}
\newtheorem{remark}[theorem]{Remark}
\def\k{\mathsf k}
\nc{\Hom}{{\rm{Hom}}}
\nc{\chara}{{\rm{char}}}
\nc{\Ext}{{\rm{Ext}}}
\nc{\HOM}{\underline{\rm{Hom}}}
\nc{\EXT}{\underline{\rm{Ext}}}
\nc{\TOR}{\underline{\rm{Tor}}}
\nc{\End}{{\rm{End}}}
\nc{\GL}{{\rm{GL}}}
\nc{\SL}{{\rm{SL}}}
\nc{\Rep}{{\rm{Rep}}}
\nc{\ad}{{\rm{ad}}}
\nc{\dlim}{\varinjlim}
\newcommand{\Frac}{{\rm{Frac}}}
\newcommand{\Spec}{{\rm{Spec}}}
\newcommand{\Autk}{{\rm{Aut}}}
\newcommand{\Der}{{\rm{Der}}}
\newcommand{\Ker}{{\rm{Ker}}}
\begin{document}
\title[Noncommutative Noether's problem]{Noncommutative Noether's problem\\ vs \\ classical Noether's problem}

\author{Vyacheslav Futorny}
\author{Jo\~ao Schwarz}
\address{Instituto de Matem\'atica e Estat\'istica, Universidade de S\~ao
Paulo,  S\~ao Paulo SP, Brasil} \email{futorny@ime.usp.br,}\email{jfschwarz.0791@gmail.com}

\begin{abstract}
We address the Noncommutative Noether's Problem on the invariants of Weyl fields for linear actions of finite groups. We prove that if the  variety $\mathbb A^n(\k)/G$ is rational 
then  the Noncommutative Noether's Problem  is  positively solved for $G$ and any field $\k$ of characteristic zero. In particular, this gives positive solution for  all  pseudo-reflections groups, for the alternating groups ($n=3, 4, 5$) and for any finite group when $n=3$  and $\k$ is algebraically closed. Alternative proofs are given for the complex field and for  all  pseudo-reflections groups. In the later case an effective algorithm of finding the Weyl generators is described. 
We also study birational equivalence for the rings of invariant differential operators on
 complex affine irreducible varieties.

\end{abstract}

\maketitle

\section{Introduction}
We will assume that all algebras are considered over the field $\k$ of characteristic zero. 

Let $G$ be a finite group acting linearly on the ring of polynomials $\k [x_1, \ldots, x_n]$.  The ring of invariants $\k [x_1, \ldots, x_n]^G$ is polynomial if and only if $G$ is a finite group generated by pseudo-reflections by the Chevalley-Shephard-Todd theorem. The pseudo-reflection groups were classified  by Shephard and Todd. When $\k= \mathbb{Q}, \mathbb{R}, \mathbb{C}$ they are the Weyl groups, the finite Coxeter groups and the unitary reflection groups, respectively. 
Geometrically, the Chevalley-Shephard-Todd theorem solves the problem of description of all finite groups $G$ acting linearly on the affine space for which the quotient variety $\mathbb{A}^n(\k)/G$ is biregular to $\mathbb{A}^n(\k)$.

Extend the linear action of  $G$ on $\k [x_1, \ldots, x_n]$ to the action on the field of rational functions $K_n=\k(x_1, \ldots, x_n)$.  The birational version of the problem above leads to the classical Noether's Problem (CNP for short) related to the $14$-th Hilbert's problem, posing a question whether   $K_n^G$ is a purely transcendental extension of $\k$ or, equivalently,  whether $\mathbb{A}^n(\k)/G$ is birational to $\mathbb{A}^m(\k)$ for some $m$. In the latter case we say that the quotient variety $\mathbb{A}^n(\k)/G$ is \emph{rational}.

Well known  cases with positive solution for the CNP include  $n=1$ (Luroth), $n=2$ (Miyata) or $n=3$ and $\k$ is algebraically closed (Burnside)  and   any $n$ when the representation of $G$ is isomorphic to a direct sum of one
dimensional representations (Fischer). It also holds for alternating groups  $G=A_n$ for $n=3, 4, 5$ (Maeda).   Detailed references including the counter-examples  of Swan, Voskresenskii and Saltman  can be found in \cite{Dumas} and \cite{Jensen}.

Passing to a noncommutative case consider the $\k$-algebra of differential operators on the polynomial ring $\k [x_1, \ldots, x_n]$, which is the $n$-th Weyl algebra $A_n(\k)$, and extend   the action of  $G$ to a linear action
on $A_n(\k)$.  The algebra $A_n(\k)$ is a simple Ore domain with admits the skew field of fractions which we will denote by $F_n(\k)$.  The action of  $G$ extends naturally to $F_n(\k)$.

An analog of the Noether's Problem for the Weyl algebra $A_n$ was first considered by Alev and Dumas in \cite{AD1}, where they asked whether $F_n(\k)^G$ is isomorphic to $F_m(L)$ for some $m$ and some purely transcendental extension 
$L$ of $\k$. In fact, if such isomorphism holds, then $m=n$ and $L=\k$.  This is the case, for instance,  for  $n=1$ and $n=2$ and an arbitrary finite group $G$ \cite{AD1}, for any $n$ and any $G$ whose natural representation decomposes into a direct sum of one dimensional representations \cite{AD1}. In particular,  it holds for all $n \ge 1$ if $G$ is abelian and $k$ is
algebraically closed. It was shown in \cite{Eshmatov} that it also holds 
 for any $n$ and any complex reflection group.

Due to the importance  of Weyl algebras and to the fact that they are the simplest noncommutative deformations of polynomial algebras,  we call the analog of the Noether's Problem for $A_n$ the \emph{Noncommutative Noether's Problem} (NNP for short).

The cases when the NNP has a positive solution are of special interest in view  of the rigidity of the Weyl algebras  proven by Alev and Polo \cite{AP}: $A_n(\k)^{G}$ is not isomorphic to $A_n(\k)$  when $\k$ is algebraically closed,  
for any non trivial linear action of $G$.     This was recently generalized by  Tikaradze \cite{Tikaradze}, who showed that $A_n(\mathbb{C})$ is not isomorphic to any invariant subring $D^G$ for any domain $D$ and any finite group $G$ unless $D\simeq A_n(\mathbb{C})$ and $G$ is trivial.  Hence, positive solutions of the NNP give examples for the question posed by Kirkman, Kuzmanovich and Zhang \cite{KKZ}, asking for which rigid algebras the skew field of fractions and its  skew subfield of invariants are isomorphic.  In this case the algebra and its subalgebra of invariants are called birationally equivalent.
The Noncommutative  Noether's Problem is also connected to the Gelfand-Kirillov Conjecture on the birational equivalence between the universal enveloping algebras and Weyl algebras. It can be used to reprove the Gelfand-Kirillov Conjecture for $gl_n$ and $sl_n$ \cite{Futorny} and show it for all finite $W$-algebras of type $A$ \cite{FMO}.  For quantum versions of the Noether's Problem and  the Gelfand-Kirillov Conjecture we refer to \cite{FH} and \cite{Hartwig}.  

We will say that the NNP holds for a group $G$ if it has the positive solution for $G$. We will also say that the CNP \emph{implies} the NNP if whenever the CNP holds for a group $G$, the NNP also holds for the extended action of $G$.  
The first goal of our paper is to prove the following statement which was first conjectured in less general form  in \cite{Schwarz}

\begin{theorem}\label{main}
For any field $\k$ of zero characteristic and any linear action of a finite group $G$ if the quotient  variety $\mathbb A^n(\k)/G$ is rational then the NNC holds for $G$, that is
the CNP implies the NNP.
\end{theorem}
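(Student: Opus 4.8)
The plan is to show that rationality of $\mathbb{A}^n(\k)/G$ forces $F_n(\k)^G \cong F_n(\k)$, the skew field of fractions of the $n$-th Weyl algebra. The key structural fact I want to exploit is that $F_n(\k)$ is recognizable as the skew field of fractions of the ring of differential operators on $\mathbb{A}^n(\k)$, and that this construction is essentially birational in nature. Concretely, if $X$ is a smooth irreducible affine variety of dimension $n$ over $\k$, then the ring of differential operators $\mathcal{D}(X)$ is an Ore domain whose skew field of fractions $\mathrm{Frac}(\mathcal{D}(X))$ depends only on the birational class of $X$; so if $X$ is rational, i.e. birational to $\mathbb{A}^n(\k)$, then $\mathrm{Frac}(\mathcal{D}(X)) \cong F_n(\k)$. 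This birational invariance is the engine of the whole argument.

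First I would establish, or invoke, the statement that for a smooth irreducible affine variety the skew field $\mathrm{Frac}(\mathcal{D}(X))$ is a birational invariant of $X$; the paper's abstract explicitly announces a study of ``birational equivalence for the rings of invariant differential operators,'' so I expect this to be packaged as a lemma earlier in the development. Second, I would identify $F_n(\k)^G$ with such a skew field of differential operators. The action of $G$ on $A_n(\k) = \mathcal{D}(\mathbb{A}^n(\k))$ is induced by the linear action on $\mathbb{A}^n(\k)$, and one has $A_n(\k)^G = \mathcal{D}(\mathbb{A}^n(\k))^G$. The crucial point is to relate the invariant differential operators $\mathcal{D}(\mathbb{A}^n(\k))^G$ to the differential operators $\mathcal{D}(\mathbb{A}^n(\k)/G)$ on the quotient variety. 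On a dense open locus where $G$ acts freely, $\mathbb{A}^n(\k) \to \mathbb{A}^n(\k)/G$ is an \'etale $G$-cover, and there the invariant operators coincide with operators on the quotient; passing to skew fields of fractions eliminates the discrepancy coming from the ramification locus, so that $F_n(\k)^G \cong \mathrm{Frac}(\mathcal{D}(Y))$ where $Y = \mathbb{A}^n(\k)/G$.

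The third and final step is to combine these: rationality of $Y$ means $Y$ is birational to $\mathbb{A}^n(\k)$, and the birational invariance from the first step then gives $\mathrm{Frac}(\mathcal{D}(Y)) \cong \mathrm{Frac}(\mathcal{D}(\mathbb{A}^n(\k))) = F_n(\k)$. Chaining the isomorphisms yields $F_n(\k)^G \cong F_n(\k)$, which is precisely the positive solution of the NNP for $G$.

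The main obstacle I anticipate is the second step, namely the passage from $\mathcal{D}(\mathbb{A}^n(\k))^G$ to $\mathcal{D}(Y)$ at the level of skew fields. The quotient $Y$ is singular in general, so $\mathcal{D}(Y)$ is delicate; one must argue that after localizing to the smooth free locus the distinction between ``$G$-invariant operators upstairs'' and ``operators on $Y$'' disappears, and that localization is harmless for computing the skew field of fractions of an Ore domain. A secondary subtlety is ensuring the birational invariance in the first step is genuinely insensitive to singularities, i.e. that it suffices to compare $\mathcal{D}$ on smooth open dense subsets and that these generate the same Ore localization. Once those localization arguments are in place, the logical skeleton—birational invariance of $\mathrm{Frac}(\mathcal{D}(-))$ plus identification of the invariant subfield with a differential-operator skew field on the quotient—closes the proof.
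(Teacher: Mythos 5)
Your argument is, in outline, exactly the paper's \emph{alternative} proof of Theorem \ref{main} --- the one the paper carries out in Section 5 via Theorems \ref{main1} and \ref{main2} --- and there it is explicitly restricted to $\k=\mathbb{C}$. The gap is that the engine of your second step, the identification of invariant operators upstairs with operators on the quotient (birationally), rests on the Cannings--Holland theorem (Theorem \ref{C-H}) and, in the paper's execution, also on Knop's theorem (Theorem \ref{thm-knop}); both are stated and cited only over the complex numbers. Nothing in your sketch, nor in the literature the paper invokes, supplies the statement $\Frac(D(X)^G)\cong\Frac(D(X/G))$ over an arbitrary field of characteristic zero, which is what the theorem claims. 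A secondary point: your phrase ``on a dense open locus where $G$ acts freely, the invariant operators coincide with operators on the quotient'' hides a real difficulty even over $\mathbb{C}$ --- for a linear action the non-free locus can have codimension one (this happens precisely for reflection groups, a central case here), so hypothesis $(\dagger)$ of Cannings--Holland fails on $\mathbb{A}^n$ itself, and one must first pass to an invariant principal affine open $W$ inside the free locus and then argue, via a chain of inclusions $D(\mathcal{O}(X))\subset D(W)\subset D(\mathcal{O}(X)_f)$ with $f$ a $G$-invariant function, that localization recovers $\Frac(D(X)^G)$ from $\Frac(D(W)^G)$. This is exactly what Section 5 of the paper does, and it is more than a routine Ore-localization remark.

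The paper's actual proof of the general statement avoids geometry on the quotient entirely and is purely algebraic, following Levasseur--Stafford. One has the inclusions $\mathcal{B}\subset A_n(\k)^G\subset D(\mathcal{O}(V^*)^G)$, where $\mathcal{B}$ is generated by $\mathcal{O}(V^*)^G$ and $\mathcal{O}(V)^G$ and the second inclusion is restriction of domain; localizing at the Ore set $S=\mathcal{O}(V^*)^G\setminus\{0\}$ and using Muhasky's result $D(\mathcal{O}(V^*)^G)_S=D(F)$, $F=\Frac(\mathcal{O}(V^*)^G)$, a filtration and Krull-dimension argument (Lemma \ref{lemma-curious}, due to Levasseur) forces $\mathcal{B}_S=D(F)$, whence $\Frac(A_n(\k)^G)\cong\Frac(D(F))$ (Lemma \ref{lemma-MAIN}). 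The CNP hypothesis then says $F\cong\k(x_1,\ldots,x_n)$, so $D(F)$ is a localization of $A_n(\k)$ and $\Frac(A_n(\k)^G)\cong F_n(\k)$. This route works over every field of characteristic zero because it never invokes free loci, \'etale covers, or $D$ of a singular quotient variety. To salvage your proposal as a proof of the full theorem you would either have to prove a Cannings--Holland-type descent statement over arbitrary characteristic-zero fields (for instance by Galois descent for the extension $\k(x_1,\ldots,x_n)/\k(x_1,\ldots,x_n)^G$), or switch to the algebraic argument above; as written, your proof establishes only the complex case.
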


The was known to be true for $n=1$ and $n=2$, for any $n$ when the natural representation of $G$ is isomorphic to a direct sum of one dimensional representations \cite{AD1} and for any complex reflection group \cite{Eshmatov}.

As an immediate consequence of this result we obtain that the Noncommutative Noether's Problem  holds for \emph{all pseudo-reflection groups},
 \emph{alternating groups  $\mathcal A_3$, $\mathcal A_4$, $\mathcal A_5$} and for \emph{any finite group}  when $n=3$ and $\k$ is algebraically closed.

We consider separately the case of the complex field. Let    $X$ be  a complex affine irreducible variety, $G$  a finite group of automorphisms on $X$ and let $D(X)$ be   the algebra of differential operators  on $X$.
Our second main result is the following

\begin{theorem}\label{main1}
Let $X$ is an affine irreducible variety.  If $X/G$ is birationally equivalent to an irreducible  affine $Y$ then $$\Frac (D(X)^G) \cong \Frac  (D(Y)).$$
\end{theorem}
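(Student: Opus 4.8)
The plan is to factor the desired isomorphism through the quotient variety $X/G$, reducing the statement to two separate ingredients: a \emph{birational invariance} property of the skew field $\Frac(D(-))$, and a \emph{generic étale descent} identifying invariant differential operators with operators on the quotient. First I would record the birational invariance: if $W_1,W_2$ are irreducible affine varieties that are birationally equivalent, then $\Frac(D(W_1))\cong\Frac(D(W_2))$. The mechanism is that two such varieties share isomorphic principal open subsets $(W_1)_f\cong(W_2)_g$ for suitable $f\in\mathbb{C}[W_1]$, $g\in\mathbb{C}[W_2]$, together with the standard localization identity $D(W_h)\cong D(W)[h^{-1}]$ (the powers of $h$ form an Ore set in $D(W)$). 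Since passing to $D(W)[h^{-1}]$ is a further localization inside $\Frac(D(W))$, it does not change the skew field of fractions, so $\Frac(D(W_1))\cong\Frac(D((W_1)_f))\cong\Frac(D((W_2)_g))\cong\Frac(D(W_2))$.

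Next I would establish $\Frac(D(X)^G)\cong\Frac(D(X/G))$, which is the heart of the argument. After replacing $G$ by its image in $\Autk(X)$ I may assume the action is faithful. Over $\mathbb{C}$ the locus $U\subseteq X$ of smooth points at which $G$ acts freely is open, dense and $G$-stable; moreover its complement, being $G$-stable and closed, is contained in the zero set of some \emph{invariant} regular function $f\in\mathbb{C}[X]^G$ (take $f=\prod_{g\in G}g(h)$ for any $h$ vanishing on the complement). Thus $U$ can be shrunk to a $G$-stable principal open $X_f$. Because $f$ is invariant, localization commutes with taking $G$-invariants: the Reynolds operator $e=\tfrac{1}{|G|}\sum_{g\in G}g$ satisfies $e(f^{-n}P)=f^{-n}e(P)$, so $(D(X)[f^{-1}])^G=D(X)^G[f^{-1}]$, and hence $\Frac(D(X)^G)=\Frac(D(X_f)^G)$.

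On $X_f$ the finite group $G$ acts freely on a smooth variety, so $X_f\to X_f/G$ is an étale Galois covering with group $G$; for such covers the pullback description $D(X_f)\cong\mathbb{C}[X_f]\otimes_{\mathbb{C}[X_f/G]}D(X_f/G)$ together with exactness of invariants in characteristic zero gives $D(X_f)^G\cong D(X_f/G)$. Since $X_f/G$ is a principal open subset of $X/G$, the birational invariance from the first step yields $\Frac(D(X_f/G))\cong\Frac(D(X/G))$, and chaining the identifications gives $\Frac(D(X)^G)\cong\Frac(D(X/G))$. Finally, as $Y$ is birationally equivalent to $X/G$, another application of birational invariance gives $\Frac(D(X/G))\cong\Frac(D(Y))$, completing the proof.

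The step I expect to be the main obstacle is the generic étale descent $D(X_f)^G\cong D(X_f/G)$: it is exactly here that smoothness of $X_f$ and freeness of the action are needed, and where one must verify that the natural map from invariant operators to operators on the quotient is an isomorphism rather than merely an inclusion. The reduction to the free smooth locus cut out by an \emph{invariant} $f$ is engineered precisely so that this classical descent becomes applicable and so that the passage from $X$ to $X_f$ is harmless at the level of skew fields.
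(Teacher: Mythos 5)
Your proof is correct, and its skeleton coincides with the paper's: localize to the locus where $G$ acts freely, cut out by a $G$-invariant function $f$; identify invariant operators with operators on the quotient over that locus; and finish with birational invariance of $\Frac(D(-))$, which both you and the paper derive from the localization identity $D(\mathcal{O}(X)_f)=D(\mathcal{O}(X))_f$ (Muhasky) together with the fact that localization at an invariant element commutes with taking $G$-invariants. The differences are in execution. Where you re-prove the key descent $D(X_f)^G \cong D(X_f/G)$ by \'etale descent --- the pullback description of differential operators along the finite \'etale cover $X_f \rightarrow X_f/G$ plus the Reynolds idempotent --- the paper simply cites the Cannings--Holland theorem (Theorem \ref{C-H}), whose free-action case $(\ddagger)$ requires no smoothness; this is why you must intersect with the smooth locus while the paper need not. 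Second, your reduction is more streamlined: because your $f$ is invariant, $X_f$ is $G$-stable and $X_f/G$ is the principal open of $X/G$ defined by the descended function, so the passage from $X$ to $X_f$ is the one-line identity $(D(X)_f)^G=(D(X)^G)_f$. The paper instead pulls back a principal open $W'$ of the quotient to an affine $W=\pi^{-1}(W')$ (using Hilbert--Noether finiteness of $\pi$) and then needs the sandwich $D(\mathcal{O}(X)) \subset D(W) \subset D(\mathcal{O}(X))_f$ to transfer the conclusion back to $X$. What your route buys is self-containedness and a cleaner reduction; what the paper's buys is brevity and freedom from any smoothness hypothesis at the descent step --- and that step, the \'etale pullback of $D$, is exactly the point you rightly single out as the one requiring genuine verification, since it is in effect the content of the result the paper cites.
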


In a particular case of rational varieties we have

\begin{theorem}\label{main2} 
Let $X$ be a complex affine  irreducible variety and $G$  a  finite group of automorphisms on $X$.  Suppose $X/G$ is rational.
Then $$(\Frac (D(X)))^G\simeq F_n(\mathbb C).$$
\end{theorem}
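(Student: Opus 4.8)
The plan is to obtain Theorem \ref{main2} as a direct consequence of Theorem \ref{main1}, once two routine facts are in place: that forming $G$-invariants is compatible with passing to skew fields of fractions, and that the ring of differential operators on affine space is the Weyl algebra.

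First I would replace the invariants of the skew field by the skew field of the invariants. For $X$ irreducible, $D(X)$ is a domain admitting a skew field of fractions $\Frac(D(X))$, on which $G$ acts. For a finite group acting (in characteristic zero) on an Ore domain $D$ one has $(\Frac D)^G = \Frac(D^G)$: here $D^G$ is again an Ore domain, and every $G$-invariant fraction can be written with a $G$-invariant denominator lying in $D^G$, so it already lies in $\Frac(D^G)$, while the reverse inclusion is clear. This is the standard behaviour of fixed rings of prime Goldie rings under finite group actions. Applying it to $D = D(X)$ gives $(\Frac(D(X)))^G = \Frac(D(X)^G)$.

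Next I would unwind the rationality hypothesis. By definition, $X/G$ rational means that $X/G$ is birational to an affine space $\mathbb{A}^m(\mathbb{C})$; since birational equivalence preserves dimension and $G$ is finite, $m = \dim(X/G) = \dim X = n$. Taking $Y = \mathbb{A}^n(\mathbb{C})$, which is affine and irreducible, in Theorem \ref{main1} yields $\Frac(D(X)^G) \cong \Frac(D(\mathbb{A}^n(\mathbb{C})))$. Since the ring of differential operators on affine space is the Weyl algebra, $D(\mathbb{A}^n(\mathbb{C})) = A_n(\mathbb{C})$, whose skew field of fractions is $F_n(\mathbb{C})$ by definition. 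Chaining the identifications,
$$(\Frac(D(X)))^G = \Frac(D(X)^G) \cong \Frac(D(\mathbb{A}^n(\mathbb{C}))) = F_n(\mathbb{C}).$$

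Essentially all of the content sits in Theorem \ref{main1}, which I am free to use, so the remaining deduction is formal. The only point demanding any care is the first step, and specifically the assertion that $\Frac(D(X))$ exists and that $G$-invariants commute with it, i.e. that $D(X)$ is an Ore domain for $X$ irreducible so that the fixed-ring theorem for Goldie rings applies. Granting this, which is precisely the setting in which Theorem \ref{main1} is phrased, the proof is complete.
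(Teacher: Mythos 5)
Your proof is correct and takes essentially the same route as the paper: the paper deduces Theorem \ref{main2} immediately from Theorem \ref{main1} by setting $Y=\mathbb{A}^n(\mathbb{C})$ with $n=\dim X$, relying on the identification $(\Frac(D(X)))^G\simeq \Frac(D(X)^G)$ for finite group actions on Ore domains (Faith's theorem) that it records in its preliminaries, together with $D(\mathbb{A}^n(\mathbb{C}))=A_n(\mathbb{C})$. You merely make explicit the dimension count and the fixed-ring/Ore details that the paper leaves implicit.
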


Using this theorem above we obtain a different proof of Theorem \ref{main} for the field of complex numbers.

Next we give an alternative proof of Theorem \ref{main} for all  pseudo-reflections groups and an arbitrary field of characteristic zero (Theorem \ref{NNP-pseudo-reflections}) extending the results of \cite{Eshmatov} on complex irreducible reflections groups. This approach has an advantage since it allows to find the Weyl generators  by a fairly simple algorithmic procedure. 

Finally we apply the results to birational equivalence of the cross products (Theorem \ref{thm-cross}).

\

\noindent{\bf Acknowledgements.}  V.F. is
supported in part by  CNPq grant (304467/2017-0) and by 
Fapesp grant (2014/09310-5).  J.S. is supported in part by Fapesp grant (2014/25612-1). The second author is very grateful to Jacques Alev for attention and fruitful discussion.

\section{Preliminaries}

\subsection{Weyl algebras}

We will denote by $A_n(\k)$  the $n$-th Weyl algebra over the field $\k$, which is the unital associative algebra generated   over  $\k$  by the elements $x_1, \ldots, x_n$, $\partial_1, \ldots, \partial_n$ subject to the relations 
$\partial_i  x_j-x_j \partial _i=\delta_{ij}$,   $x_ix_j=x_jx_i$, $\partial_i \partial_j=\partial_j \partial_i$ for $1\leq i,j\leq n$. 

For each $i=1, \ldots, n$ denote $t_i=\partial_i x_i$ and consider $\sigma_i\in \Autk \  \k[t_1, \ldots, t_n]$ such that
$\sigma_i(t_j)=t_j-\delta_{ij}$ for all $j=1, \ldots, n$. Let $\mathbb Z^n$ be the free abelian group generated by $\sigma_1, \ldots, \sigma_n$. Then we  have a natural embedding $A_n(\k)\rightarrow  \k[t_1, \ldots, t_n]*\mathbb Z^n$ where 
$x_i\mapsto \sigma_i$, $\partial_i \mapsto t_i\sigma_i^{-1}$, $i=1, \ldots, n$.. Moreover, $$A_n[S^{-1}]\simeq \k(t_1, \ldots, t_n)*\mathbb Z^n,$$ where $S=\k[t_1, \ldots, t_n]\setminus \{0\}$ \cite{Futorny}.

 

Let $X$ be an affine variety over $\k$ with the coordinate ring  $\mathcal O(X)$.  The ring of differential operators  $D(X)$   on $X$ (and on $\mathcal O(X)$) is defined as
 $D(X)=\cup_{n=0}^{\infty} D(X)_n$, where $D(X)_0=\mathcal O(X)$ and
$$ D(X)_n \, = \, \{ \, d \in \End_{\k} (\mathcal O(X)) \, :\, d\, b - b\, d \in D(X)_{n-1}\, \mbox{ for all }\, b \in \mathcal O(X) \}.$$ 
In particular, $\mathcal O(X)\subset D(X)$.
For smooth $X$, 
 $D(X)$ is 
the subalgebra of $End_{\k} (\mathcal O(X))$ generated  by the $\k$-linear derivations of $\mathcal O(X)$ and the scalar multiplications $l_f$, $f\in \mathcal O(X)$, that sends $g \rightarrow fg$ for any $g \in \mathcal O(X)$.  

The fact that at $D(X)$ is an Ore domain for irreducible and smooth $X$ is well known (e.g., \cite{McConnell}). But in fact this holds without the latter assumption, as the following proposition shows:

\begin{proposition}
Let $X$ be an affine irreducible variety over $\k$. Then $D(X)$ is an Ore domain.
\begin{proof}
Call $K:= \, Frac \, O(X)$. We can realize $D(X)$ as a subset of $D(K)$ in the following way (\cite{McConnell}, 15.5.5(iii)):

\[ D(X) = \{ d \in D(K)|d(O(X)) \subset O(X) \}. \]
Now, since $K$ is a regular ring, $D(K)$ is a non-commutative domain with finite Gelfand-Kirillov dimension; since $D(X)$ is a subring of it, the same properties hold for it. Hence, $D(X)$ does not contain a subalgebra isomorphic to the free associative algebra in two variables. It follows, then, by a result of Jategaonkar (\cite{KL}, Prop. 4.13), that $D(X)$ is an Ore domain.
\end{proof}
\end{proposition}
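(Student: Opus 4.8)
The plan is to establish the two defining properties of an Ore domain---that $D(X)$ has no zero divisors and that it satisfies the left and right Ore conditions---at one stroke, by realizing $D(X)$ as a subring of a ring that is patently a domain of finite Gelfand--Kirillov dimension. The feature of the statement that must be respected is that $X$ is only assumed irreducible, not smooth. For smooth $X$ one could argue directly: the order filtration has associated graded ring $\gr{D(X)} \cong \mathcal{O}(T^{*}X)$, a commutative domain, which forces $D(X)$ to be a Noetherian domain of finite GK-dimension $2\dim X$. For singular $X$ this associated graded need no longer be a domain, so I would instead pass to the generic point, where smoothness is automatic.

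First I would form the function field $K = \Frac\,\mathcal{O}(X)$, which is legitimate because irreducibility makes $\mathcal{O}(X)$ a commutative domain. The localization theory of rings of differential operators (\cite{McConnell}, 15.5.5) identifies
$$D(X) = \{\,d \in D(K) : d(\mathcal{O}(X)) \subseteq \mathcal{O}(X)\,\},$$
exhibiting $D(X)$ as a subring of $D(K)$. The next step is to control $D(K)$. Since $\chara \k = 0$, the finitely generated extension $K/\k$ is separably generated, so $K$ is a regular $\k$-algebra of Krull dimension $d = \dim X$; equivalently $\Der_{\k}(K)$ is a free $K$-module of rank $d$, and $D(K)$ is generated over $K$ by $\Der_{\k}(K)$. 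Its order filtration then has associated graded ring $\mathrm{Sym}_{K}(\Der_{\k}(K))$, a commutative domain, so that $D(K)$ is a Noetherian domain of finite GK-dimension $2d$.

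Finally I would descend these properties to the subring $D(X) \subseteq D(K)$. As a subring of the domain $D(K)$ it is a domain, and since Gelfand--Kirillov dimension is monotone under passage to subalgebras, $D(X)$ again has finite GK-dimension. A $\k$-algebra of finite GK-dimension cannot contain a free subalgebra $\k\langle x,y\rangle$ on two generators, the latter having exponential growth and hence infinite GK-dimension. The theorem of Jategaonkar (\cite{KL}, Prop.~4.13) then applies: a domain containing no free subalgebra of rank two satisfies the Ore condition, so $D(X)$ is an Ore domain. The genuine obstacle lies entirely in the first reduction---ensuring that singularities of $X$ do no harm---which is exactly what the generic-point description of $D(X)$ inside the regular ring $D(K)$ circumvents.
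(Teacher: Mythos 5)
Your proof is correct and follows the paper's argument exactly: pass to $K=\Frac\,\mathcal{O}(X)$, identify $D(X)$ inside $D(K)$ via \cite{McConnell} 15.5.5, use that $D(K)$ is a domain of finite Gelfand--Kirillov dimension to rule out a free subalgebra on two generators, and conclude by Jategaonkar's result (\cite{KL}, Prop.\ 4.13). The extra detail you supply (the associated graded of $D(K)$ being $\mathrm{Sym}_K(\Der_\k(K))$) is a fuller justification of a step the paper asserts, not a different route.
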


The Weyl algebra $A_n(\k)$ is isomorphic to the the ring of differential operators on the affine space $X=\mathbb A^{n}$ (or, equivalently, on the polynomial algebra $\mathcal O(X)$ in $n$ variables).

\subsection{Subalgebras of invariants}

For a ring $R$ and a group $G$ of automorphisms of $R$ denote by $R^G$ the subring of $R$
consisting of elements fixed by every element of
$G$, so-called the Galois subring corresponding to $G$.  It was shown by Faith \cite{Fa} that  if  $R$ is a right Ore domain then any 
subring $R^G$  corresponding to a finite group $G$ is also
right Ore domain. Assuming $R$ to be both right and left Ore domain, it admits the skew field of fractions which we denote by $\mathcal F (R)$.  Then the action of $G$ on $R$ extends uniquely to an action on $\mathcal F (R)$ and $(\mathcal F (R))^G\simeq \mathcal F (R^G)$. Hence, $\mathcal F: R\mapsto \mathcal F (R)$ defines a functor from the category of Ore domains  with injective homomorphisms to the category of skew fields. If $\phi: R_1\rightarrow R_2$ is a morphism of domains such that $\mathcal F (\phi)$ is an isomorphism of skew fields then we say that $R_1$ and $R_2$ are birationally 
equivalent. 

If $\phi: R_1 \rightarrow R_2$ is a $G$-equivariant morphism then it induces a morphism $\phi^G: R_1^G \rightarrow R_2^G$. If, in addition, $\mathcal F (\phi)$ is an isomorphism then 
$R_1^G$ and $R_2^G$ are birationally 
equivalent. 

Since $A_n(\k)$ is a Noetherian Ore domain, it admits the skew field of fractions $F_n(\k):=\mathcal F(A_n(\k))$, called \emph{the Weyl field}.

\section{Algebras of differential operators}

Let $X$ be an affine variety over $\k$ with an action of a finite group $G$.  Then this action 
 can be extended to the ring of differential operators $D(X)$ on $X$. Indeed, the group  $G$ acts by algebra automorphisms  on $\mathcal{O}(X)$. The extension of this action to the ring of differential operators $D(X)$ is done as follows: if $d\in D(X)$ then  $ (g \ast d) \cdot f = (g \circ d \circ g^{-1}) \cdot f$    for any $f \in \mathcal{O}(X)$. The elements of $D(X)$ invariant under the action of $G$ 
 are called \emph{$G$-invariant differential operators}.


The following result was established by Cannings and Holland \cite{CH}:

\begin{theorem} \label{C-H} Let $\k$ be the field of complex numbers.
Let $X$ be an affine irreducible algebraic variety over $\mathbb C$ with an action of a finite group $G$  on it. 
\begin{itemize}
\item[1)]
There exits a maximal open dense  $G$-invariant  subset $V\subset X$, on which the induced action of $G$ is free. 
Let  $\pi : X \rightarrow X/G$ be the canonical projection  and $V'=\pi(V)$.  Then $V'$  is open dense in $X/G$ and, since $V$ is a complete pre-image, $V = \pi^{-1}(V)$,  the map $\pi$ restricts to the quotient map:  $$\pi |_{V}: V\rightarrow V'.  $$ 
\item[2)]
Let $V$ be an open subset of $X$ on which the action of $G$ is free. If $(\dagger )$ $X$ satisfies the Serre condition $S_2$ (in particular, if it is normal) and 
$$ \, codim_X(X-V) \geq 2$$ then $$D(X/G) \cong D(X)^G.$$ The same isomorphism holds if $(\ddagger)$ $G$ acts freely.
\end{itemize}
\end{theorem}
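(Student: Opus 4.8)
The plan is to treat the two parts of the statement separately, the second resting essentially on a single extension lemma for differential operators.

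For part~1, I would set $V = X \setminus \bigcup_{g \neq 1} X^g$, where $X^g = \{x \in X : gx = x\}$ is the fixed locus of $g$. After replacing $G$ by its image in $\Autk X$ we may assume the action is faithful, so each $X^g$ with $g \neq 1$ is a proper closed subset of the irreducible variety $X$; then $V$ is open and dense, and it is exactly the set of points with trivial stabilizer, hence the largest open subset on which $G$ acts freely. A short computation with conjugates ($hgx=gx$ forces $x\in X^{g^{-1}hg}$) shows $V$ is $G$-stable, whence $V = \pi^{-1}(\pi(V))$. Since $\pi$ is a finite, hence closed, morphism, $V' = \pi(V) = (X/G)\setminus \pi(X\setminus V)$ is open and dense, and $\pi$ restricts to the quotient map $\pi|_V : V \to V'$.

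For part~2 I would first produce the comparison map. Because $\mathcal{O}(X/G) = \mathcal{O}(X)^G$ and a $G$-invariant operator commutes with the action of $G$, every $d \in D(X)^G$ preserves $\mathcal{O}(X)^G$ and restricts to a differential operator on $X/G$; this gives an algebra homomorphism $\rho : D(X)^G \to D(X/G)$, and the goal is to show $\rho$ is bijective. I would establish two ingredients. The first is the free case $(\ddagger)$: when $G$ acts freely, $\pi$ is a finite étale Galois covering, so the relative principal parts vanish and $\mathcal{D}_X \cong \pi^{*}\mathcal{D}_{X/G}$; taking $G$-invariant global sections and using $(\pi_*\mathcal{O}_X)^G = \mathcal{O}_{X/G}$ together with the projection formula yields $D(X)^G \cong D(X/G)$. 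Applied to the free quotient $\pi|_V : V \to V'$ this gives $D(V)^G \cong D(V')$. The second is the extension lemma: under $(\dagger)$ the restriction map $D(X) \to D(V)$ is an isomorphism, and $G$-equivariantly so, as it takes place inside $D(K)$ with $K = \Frac\,\mathcal{O}(X)$. Granting these, I would chain them: given $d \in D(X/G)$, restrict to $d|_{V'} \in D(V')$, transport it through the free-case isomorphism to $\tilde d_V \in D(V)^G$, and extend it through the extension lemma to $\tilde d \in D(X)^G$; since $\rho(\tilde d)$ and $d$ then agree on the dense open $V'$ they coincide, which gives surjectivity, and the same bookkeeping on $V'$ gives injectivity. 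Note this route uses $(\dagger)$ only for $X$ and never requires $X/G$ to be $S_2$.

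The main obstacle is the extension lemma. The clean way is to work with the order filtration: the sheaf $\mathcal{D}^{\leq k}_X$ of operators of order $\leq k$ is the dual $\mathcal{H}om_{\mathcal{O}_X}(\mathcal{P}^k_X, \mathcal{O}_X)$ of the coherent sheaf of $k$-th principal parts, and a $\mathcal{H}om$ into the $S_2$ sheaf $\mathcal{O}_X$ is again $S_2$. Since a coherent $S_2$ sheaf has sections that extend uniquely across a closed subset of codimension $\geq 2$ (a depth/local-cohomology argument), one obtains $\Gamma(X, \mathcal{D}^{\leq k}_X) = \Gamma(V, \mathcal{D}^{\leq k}_X)$ for every $k$, and summing over $k$ yields $D(X) = D(V)$; equivalently one may combine the characterization $D(X) = \{\,d \in D(K) : d\,\mathcal{O}(X) \subseteq \mathcal{O}(X)\,\}$ of the Proposition with the $S_2$-Hartogs equality $\mathcal{O}(X) = \mathcal{O}(V)$. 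The points where the hypotheses are genuinely consumed are precisely the étale description of $\mathcal{D}_X$ in the free case $(\ddagger)$ and the $S_2$ property of the dual sheaves $\mathcal{D}^{\leq k}_X$ under $(\dagger)$.
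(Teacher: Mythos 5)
The paper offers no proof of this theorem: it is quoted as a known result of Cannings and Holland \cite{CH}, so there is no internal argument to compare yours against, and the only benchmark is the original article. Judged on its own terms, your proposal is essentially correct, and its skeleton --- prove the free case, then reduce the $S_2$/codimension-two case to the free locus by an extension argument --- is the natural one and is exactly where the hypotheses $(\dagger)$ and $(\ddagger)$ get consumed. Part 1 is the standard argument and is complete, modulo the remark that the statement implicitly assumes the action is faithful (otherwise no open set carries a free action), which you handle by replacing $G$ with its image in ${\rm Aut}(X)$; note also that in part 2 you should replace the given $V$ by the maximal free locus of part 1, so that $V$ is saturated and $\pi|_V \colon V \to V'$ really is a quotient map (the codimension hypothesis survives this enlargement).

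For part 2, your two ingredients are correctly identified, but each hides a step that must be made explicit. First, the assertion that a set-theoretically free action makes $\pi\colon X \to X/G$ finite \'etale genuinely requires proof, because $X$ is allowed to be singular: the standard argument is that invariants commute with completion and $G$ permutes the points of a fibre simply transitively, whence $\widehat{\mathcal{O}}_{X/G,\pi(x)} \cong \bigl(\prod_{x' \in \pi^{-1}(\pi(x))} \widehat{\mathcal{O}}_{X,x'}\bigr)^G \cong \widehat{\mathcal{O}}_{X,x}$, so $\pi$ is flat and unramified; this is where freeness and characteristic zero enter. Second, chaining the three isomorphisms requires knowing that your \'etale-descent isomorphism $D(V)^G \cong D(V')$ coincides with the restriction map $\rho_V$, or at least is compatible with the embeddings into $D(K)$ and $D(K^G)$ where $K = \Frac\,\mathcal{O}(X)$; this follows from naturality of the construction but has to be said, and an economical alternative is to work inside $D(K^G)$ throughout, so that all maps become inclusions and the surjectivity/injectivity bookkeeping is automatic. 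Third, in the $(\dagger)$ case the free quotient $V \to V'$ is only quasi-affine; your sheaf-theoretic proof of the free case covers this, which is a genuine advantage of your route --- the paper itself, when it applies this theorem in its Section 5, shrinks to a principal affine open subset precisely to stay in the affine setting. The remaining ingredients (coherence of the sheaves of principal parts, the depth estimate $\operatorname{depth}\,\mathcal{H}om_{\mathcal{O}_X}(\mathcal{F},\mathcal{O}_X) \geq \min(2,\operatorname{depth}\,\mathcal{O}_X)$, and the local-cohomology Hartogs extension, or equivalently the combination of $D(X) = \{d \in D(K) : d(\mathcal{O}(X)) \subseteq \mathcal{O}(X)\}$ with $\mathcal{O}(X) = \mathcal{O}(V)$) are standard and correctly invoked. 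With these points filled in, your argument is a sound reconstruction of the cited result.
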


Denote by $\mathcal{O}(X)$ the coordinate ring of $X$. Recall that the variety $X$ satisfies the  $S_2$ condition of  Serre if  $depht \, \mathcal{O}(X)_p \geq inf\{2, height(p) \}$ for all prime ideals $p$ of $\mathcal{O}(X)$ (\cite{Matsumura}).

\begin{lemma} \label{main-lemma}
Let $X$ be an irreducible affine variety. Suppose that a finite group $G$ is acting by automorphisms on $X$ and either $\dagger$ or $\ddagger$ hold. If $X/G$ is birationally equivalent to a affine irreducible variety $Y$ then $\Frac (D(X)^G) \cong \Frac (D(Y))$.
\end{lemma}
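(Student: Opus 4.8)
The plan is to factor the claimed isomorphism $\Frac(D(X)^G)\cong\Frac(D(Y))$ through the intermediate quotient variety $X/G$ and to reduce the whole statement to two separate ingredients: the Cannings--Holland comparison of invariant and quotient operators, and the birational invariance of the construction $Z\mapsto\Frac(D(Z))$. First I would note that since $X$ is irreducible and affine and $G$ is finite, $\mathcal O(X)^G$ is a finitely generated domain, so $X/G=\Spec\mathcal O(X)^G$ is again an irreducible affine variety and $D(X/G)$ is defined. Because either $\dagger$ or $\ddagger$ is assumed, Theorem~\ref{C-H} supplies a ring isomorphism $D(X)^G\cong D(X/G)$. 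Both rings are Ore domains (the former by Faith's theorem \cite{Fa} applied to the Ore domain $D(X)$, the latter by the Proposition), so applying the fraction functor $\mathcal F$ yields $\Frac(D(X)^G)\cong\Frac(D(X/G))$. It then remains to establish the purely birational statement $\Frac(D(X/G))\cong\Frac(D(Y))$.

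For this I would prove the general fact that for any irreducible affine variety $Z$ the skew field $\Frac(D(Z))$ depends only on the function field $K=\Frac(\mathcal O(Z))$. Using the realization $D(Z)=\{\,d\in D(K):d(\mathcal O(Z))\subset\mathcal O(Z)\,\}$ recorded in the proof of the Proposition, together with the localization theorem for rings of differential operators (\cite{McConnell}), the set $S=\mathcal O(Z)\setminus\{0\}$ is an Ore set in $D(Z)$ and $D(Z)[S^{-1}]\cong D(K)$. Since localizing an Ore domain at an Ore set of regular elements does not enlarge its skew field of fractions, $\Frac(D(Z))\cong\Frac(D(K))$, an object that manifestly depends only on $K$.

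To conclude, the birational equivalence $X/G\sim Y$ gives an isomorphism of function fields $\Frac\mathcal O(X/G)\cong\Frac\mathcal O(Y)$, and differential operator rings transport along field isomorphisms, so $D(\Frac\mathcal O(X/G))\cong D(\Frac\mathcal O(Y))$. Feeding this into the previous paragraph gives $\Frac(D(X/G))\cong\Frac(D(Y))$, and chaining with the Cannings--Holland isomorphism yields $\Frac(D(X)^G)\cong\Frac(D(Y))$. The step I expect to require the most care is the localization identity $D(Z)[S^{-1}]\cong D(K)$: the quotient $X/G$ is in general singular even when $X$ is smooth, so one cannot rely on the description of the operators of order $\le m$ as a finitely generated projective $\mathcal O(Z)$-module. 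Instead one must check directly that $S$ is a genuine (two-sided) denominator set in $D(Z)$ and invoke the localization theorem in the generality that covers arbitrary commutative Noetherian domains, which is exactly where the assumption-free Ore property of $D(Z)$ proved earlier is needed.
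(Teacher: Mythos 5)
Your proof is correct and follows essentially the same route as the paper: pass from $\Frac(D(X)^G)$ to $\Frac(D(X/G))$ via the Cannings--Holland isomorphism, then reduce everything to the function field by localizing the ring of differential operators at $S=\mathcal O(Z)\setminus\{0\}$ and transporting along the isomorphism $\Frac\,\mathcal O(X/G)\cong\Frac\,\mathcal O(Y)$. The localization identity you rightly flag as the delicate point (valid without smoothness) is exactly what the paper handles by citing \cite{Muhasky}, Proposition 1.8, which applies to arbitrary, possibly singular, affine domains.
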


\begin{proof}
Let $S$ be the set of regular elements in $\mathcal{O}(X)^G$. Since $X/G$ is birational to $Y$  we have $$\Frac (\mathcal{O}(X)^G) =\Frac (\mathcal{O}(X)^G_S) \cong \Frac (\mathcal{O}(Y)).$$ Since 
$\Frac (D(X)^G) \cong \Frac (D(X/G)) $ by Theorem \ref{C-H}, then
 applying \cite{Muhasky}, Proposition 1.8, we have
$$\Frac (D(X)^G)  \cong \Frac (D(X/G)_S) \cong \Frac (D(\mathcal{O}(X)^G_S)) \cong \Frac D(\Frac (\mathcal{O}(Y))), $$ and hence
$\Frac (D(X)^G)  \cong \Frac (D(Y)).$ 
Note that unlike in a similar statement  in \cite{McConnell} we do not assumed the variety $X$ to be smooth.
\end{proof}

\section{CNP implies NNP}

In this section we prove our main result:  the CNP implies the NNP for an arbitrary field of characteristic zero and arbitrary  linear action of a finite group. 

Consider an arbitrary finite group $G$ acting linearly on an $n$-dimensional $\k$-vector space $V$ and  its naturally extended action on $\mathcal{O}(V^*) = \k[x_1,\ldots,x_n]$.
This action extends to  the Weyl algebra $A_n(\k)\simeq D(\mathcal{O}(V^*))$.  Recall that a positive solution for the Classical Noether's Problem for this action means that $\Frac (\mathcal{O}(V^*)^G) \cong \k(x_1,\ldots, x_n)$.

Denote $\mathcal{B}$ the subalgebra of $A_n(\k)^G$ generated by $\mathcal{O}(V^*)^G =\k[x_1, \ldots, x_n]^G$ and $\mathcal{O}(V)^G = \k [\partial_1, \ldots, \partial_n]^G$.
 Then $\mathcal{B}=A_n(\k)^G$ by  \cite{Levasseur}, Theorem 5. We will closely follow the argument in the proof of this fact. 

Set $S= \mathcal{O}(V^*)^G \setminus \{0 \}$. Since $S$ is ad-nilpotent 
on $A_n(\k)$, and hence on $\mathcal{B}=A_n(\k)^G$, it is an Ore set in both algebras (\cite{KL}, Theorem 4.9).

Denote $F:= \Frac (\mathcal{O}(V^*)^G)$.

Recall the following lemma from \cite{Levasseur}:

\begin{lemma}\label{lemma-curious}[\cite{Levasseur}, Lemma 8]
Let $L$ be a finite field extension of $k$, with $tdg_k \, L = l$, and consider the ring of differential operators on $L$, $D(L)$. Let $A$ be a subalgebra of $D(L)$ containing $L$, with a filtration induced from that of $D(L)$ (by order of differential operator). If the associated graded algebra of $A$ contains as a subalgebra a finitely generated graded $L$-algebra $B$ such that $Kdim \, B = l$, then $A=D(L)$.
\end{lemma}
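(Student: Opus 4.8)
The plan is to pass to the associated graded ring and reduce the statement to a question in commutative algebra. Since $k$ has characteristic zero, the extension $L/k$ is separably generated; I would fix a separating transcendence basis $t_1,\dots,t_l$ of $L$, so that the derivations $\partial_i=\partial/\partial t_i$ form an $L$-basis of $\Der_k(L)$ and $D(L)=L\langle\partial_1,\dots,\partial_l\rangle$. With respect to the order filtration one then has $\gr{D(L)}=\mathrm{Sym}_L\Der_k(L)=L[\xi_1,\dots,\xi_l]$, a polynomial ring of Krull dimension $l$, where $\xi_i=\sigma(\partial_i)$ is the principal symbol of $\partial_i$. Its Poisson bracket is determined by $\{\xi_i,\xi_j\}=0$, $\{f,g\}=0$ for $f,g\in L$, and $\{\xi_i,f\}=\partial_i(f)$; in particular $\{a,t_i\}=\partial a/\partial\xi_i$ for every $a\in L[\xi_1,\dots,\xi_l]$.

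Next I would record the two structural facts about $\gr{A}$. First, because $A\supseteq L$ and $A$ is closed under commutators, $\gr{A}$ is a graded $L$-subalgebra of $\gr{D(L)}$ closed under the Poisson bracket; bracketing an element of $A$ with the degree-zero elements $t_i\in L\subseteq A$ and reading off principal symbols shows, via the identity $\{a,t_i\}=\partial a/\partial\xi_i$, that $\gr{A}$ is stable under each fibre derivation $\partial/\partial\xi_i$. Second, from $B\subseteq\gr{A}\subseteq L[\xi_1,\dots,\xi_l]$ and $\mathrm{Kdim}\,B=l$ I obtain $\mathrm{tr.deg}_L\Frac(\gr{A})=l$, i.e. $\gr{A}$ contains $l$ elements algebraically independent over $L$. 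Here the only role of the finitely generated subalgebra $B$ is to certify that $\gr{A}$ attains the maximal transcendence degree.

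The heart of the argument is then purely commutative: a graded $L$-subalgebra $C\subseteq L[\xi_1,\dots,\xi_l]$ that is stable under all $\partial/\partial\xi_i$ and satisfies $\mathrm{tr.deg}_L\Frac(C)=l$ must equal $L[\xi_1,\dots,\xi_l]$. To prove this I would let $W=C_1$ be the degree-one part and choose linear coordinates so that $W$ is spanned by $\xi_1,\dots,\xi_m$, and then show by induction on degree that every homogeneous $a\in C$ lies in $L[\xi_1,\dots,\xi_m]$. Indeed, if $a$ of minimal degree $d\ge 2$ involved some $\xi_j$ with $j>m$, then all lower-degree partials $\partial a/\partial\xi_\bullet$ lie in $C$ by stability and, by minimality, are free of $\xi_{>m}$; differentiating again with respect to the $\xi_i$, $i\le m$, would force $\partial a/\partial\xi_j$ to be annihilated by every $\partial/\partial\xi_i$, hence to be a homogeneous element of $L$ of positive degree, that is zero, contradicting that $a$ involves $\xi_j$. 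Thus $C\subseteq L[\xi_1,\dots,\xi_m]$, whence $m\ge\mathrm{tr.deg}_L\Frac(C)=l$, forcing $W=\bigoplus_iL\xi_i$ and $C=L[\xi_1,\dots,\xi_l]$. Applying this with $C=\gr{A}$ gives $\gr{A}=\gr{D(L)}$.

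Finally, the equality of associated gradeds upgrades to $A=D(L)$ by a routine filtered induction: the filtrations are exhaustive, $A_0=A\cap D(L)_0=L=D(L)_0$, and the degree-$d$ graded components of $A$ and $D(L)$ coincide for all $d$, so induction on $d$ yields $A_d=D(L)_d$ and hence $A=D(L)$. I expect the main obstacle to be the commutative step of the third paragraph: one must notice that commuting with the \emph{whole} of $L$ makes $\gr{A}$ stable under the fibre derivatives $\partial/\partial\xi_i$, and then run the degree-reduction carefully enough that the full transcendence-degree hypothesis can be converted into the equality of degree-one parts $W=\bigoplus_iL\xi_i$. The passages through the symbol calculus and the final filtered induction are routine by comparison.
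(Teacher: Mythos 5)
The paper offers no proof of this lemma to compare against: it is imported verbatim as Lemma 8 of Levasseur--Stafford \cite{Levasseur} and used as a black box (the lemma header itself carries the citation). So your proposal should be judged on its own, and it is a correct, complete, self-contained proof. Your reading of the hypothesis is the right one (the paper's ``finite field extension'' must mean \emph{finitely generated}, as the presence of $\mathrm{tdg}_k\,L=l$ shows). All three steps check out: (i) in characteristic zero $L/k$ has a separating transcendence basis, $\mathrm{gr}\,D(L)\cong L[\xi_1,\dots,\xi_l]$, and bracketing a lift of a symbol with $t_i\in L\subseteq A$ shows $\mathrm{gr}\,A$ is stable under each $\partial/\partial\xi_i$ --- note you only ever need brackets against the degree-zero elements of $A$, not full Poisson closure, which you correctly exploit; (ii) the commutative core (a graded $L$-subalgebra of $L[\xi_1,\dots,\xi_l]$, stable under all $\partial/\partial\xi_i$ and of transcendence degree $l$ over $L$, is the whole polynomial ring) is proved correctly: for a minimal-degree counterexample $a$ and $j>m$, commutativity of partials plus minimality forces every partial derivative of $\partial a/\partial\xi_j$ to vanish, hence $\partial a/\partial\xi_j=0$ in characteristic zero, so $a$ is free of $\xi_{>m}$, a contradiction, and then $m\geq l$ forces $C=L[\xi_1,\dots,\xi_l]$; (iii) the upgrade from $\mathrm{gr}\,A=\mathrm{gr}\,D(L)$ and $A\cap D(L)_0=L$ to $A=D(L)$ is the standard filtered induction. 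Your remark that finite generation of $B$ enters only to convert Krull dimension into transcendence degree is also accurate and clarifies the role of that hypothesis. This is the same general strategy as in the original source (symbol calculus, derivation-stability, a characteristic-zero polynomial argument), so what your write-up buys is not a new method but a proof the present paper itself does not supply.
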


\begin{lemma}\label{lemma-MAIN}
$\Frac (A_n(\k)^G) \cong \Frac (D(F))$.
\end{lemma}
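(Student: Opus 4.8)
The plan is to localise $A_n(\k)^G=\mathcal{B}$ at the Ore set $S=\mathcal{O}(V^*)^G\setminus\{0\}$, to identify the localisation $\mathcal{B}_S=\mathcal{B}[S^{-1}]$ with the full ring $D(F)$, and then read off the equality of skew fields. Since $\mathcal{B}=A_n(\k)^G$ is an Ore domain (Faith's theorem) and $S$ is an Ore set in it, localising does not change the skew field of fractions, so $\Frac(A_n(\k)^G)=\Frac(\mathcal{B})=\Frac(\mathcal{B}_S)$; it therefore suffices to prove $\mathcal{B}_S\cong D(F)$.

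First I would place everything inside one ambient ring. Writing $K=\Frac(\mathcal{O}(V^*))=\k(x_1,\dots,x_n)$, we have $A_n(\k)\subseteq D(K)$, and since each $s\in S$ is a nonzero element of the field $K\subseteq D(K)$ it is already invertible there; the universal property of Ore localisation then yields an embedding $\mathcal{B}_S\hookrightarrow D(K)$, all maps taking place inside the common skew field $F_n(\k)$. Every generator of $\mathcal{B}_S$ is $G$-invariant, so $\mathcal{B}_S\subseteq D(K)^G$. Because $K/F$ is a finite separable (Galois) extension with $K^G=F$, étale descent for differential operators gives $D(K)\cong K\otimes_F D(F)$, whence $D(K)^G\cong K^G\otimes_F D(F)=D(F)$. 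Thus $\mathcal{B}_S$ is realised as a subalgebra of $D(F)$ containing $F=(\mathcal{O}(V^*)^G)_S$, and the order filtration on $\mathcal{B}_S$ is the one induced from $D(F)$.

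Next I would apply Lemma~\ref{lemma-curious} with $L=F$, which has $\mathrm{tdg}_{\k}F=n$, and $A=\mathcal{B}_S$. The required finitely generated graded subalgebra $B$ of $\mathrm{gr}(\mathcal{B}_S)$ I would manufacture from the invariant constant-coefficient operators $\mathcal{O}(V)^G=\k[\partial_1,\dots,\partial_n]^G\subseteq\mathcal{B}\subseteq\mathcal{B}_S$. As $\k[\partial]$ is commutative and the order filtration restricts to the degree filtration, taking principal symbols identifies $\mathrm{gr}(\k[\partial]^G)$ with the invariant ring $\k[\xi_1,\dots,\xi_n]^G$ inside $\mathrm{gr}\,D(K)=K[\xi_1,\dots,\xi_n]$, and I set $B=F\cdot\k[\xi]^G\subseteq\mathrm{gr}(\mathcal{B}_S)$. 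Choosing $n$ homogeneous invariants $p_1,\dots,p_n$ that are algebraically independent over $\k$ and using flatness of $K/\k$, their symbols $\sigma(p_1),\dots,\sigma(p_n)$ stay algebraically independent over $F$, so $F[\sigma(p_1),\dots,\sigma(p_n)]$ is a polynomial ring of Krull dimension $n$; since $B$ is a finitely generated $F$-subalgebra of $F[\xi]$ this forces $\mathrm{Kdim}\,B=n$. Lemma~\ref{lemma-curious} then yields $\mathcal{B}_S=D(F)$, and the chain $\Frac(A_n(\k)^G)=\Frac(\mathcal{B}_S)=\Frac(D(F))$ completes the argument.

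The main obstacle I anticipate is the identification $\mathcal{B}_S\subseteq D(F)$ together with the matching of filtrations: one must verify that the $G$-invariant differential operators on $K$ are exactly $D(F)$ (the étale descent $D(K)\cong K\otimes_F D(F)$ and the computation of its $G$-fixed points), and that the order filtration inherited from $D(F)$ coincides with the one coming from the Weyl algebra, so that the principal symbols of the invariant operators $p_i(\partial)$ genuinely lie in $\mathrm{gr}(\mathcal{B}_S)$. By contrast, the Krull-dimension count is routine once the algebraic independence of the invariant symbols over $F$ is secured by flat base change.
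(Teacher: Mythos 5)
Your proof is correct, and its core is the same as the paper's: localise at $S$, realise $\mathcal{B}_S$ as a filtered subalgebra of $D(F)$ containing $F$, and apply Lemma~\ref{lemma-curious} to a finitely generated graded subalgebra built from $F$ and the symbols of the invariant constant-coefficient operators, forcing $\mathcal{B}_S=D(F)$. You differ in two supporting steps. First, you obtain the inclusion $\mathcal{B}_S\subseteq D(F)$ by Galois descent at the generic point: with $K=\Frac\,\mathcal{O}(V^*)$, the extension $K/F$ is Galois with group $G$, so $D(K)\cong K\otimes_F D(F)$ and $D(K)^G\cong D(F)$; the paper instead restricts invariant operators to the subring $\mathcal{O}(V^*)^G$ and invokes \cite{Muhasky}, Proposition 1.8, to identify $D(\mathcal{O}(V^*)^G)_S$ with $D(F)$. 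Your route buys a cleaner treatment of the filtration: the identification $D(K)^G\cong D(F)$ is filtered, with ${\rm gr}\,D(F)=\bigl(K[\xi_1,\dots,\xi_n]\bigr)^G$, so the symbols $p(\xi)$ of the operators $p(\partial)\in\k[\partial_1,\dots,\partial_n]^G$ visibly lie in ${\rm gr}(\mathcal{B}_S)$ --- a compatibility the paper's proof passes over silently. Second, you invoke the Levasseur--Stafford generation theorem $\mathcal{B}=A_n(\k)^G$ (\cite{Levasseur}, Theorem 5) to write $\Frac(A_n(\k)^G)=\Frac(\mathcal{B}_S)$, whereas the paper deliberately avoids it (see the Remark following the lemma): it only uses $\mathcal{B}\subseteq A_n(\k)^G$ together with the sandwich $\mathcal{B}_S\subseteq A_n(\k)^G_S\subseteq D(F)$, concluding from $\Frac(D(F))=\Frac(\mathcal{B}_S)\subseteq\Frac(A_n(\k)^G)\subseteq\Frac(D(F))$. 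Your argument can be made independent of Theorem 5 in exactly the same way, since your descent identification already gives $A_n(\k)^G\subseteq D(K)^G=D(F)$ and hence the same sandwich; as written, though, your proof carries a heavier dependency than the paper's.
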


\begin{proof}
We have $\mathcal{B} \subset A_n(\k)^G$ by definition. On the other hand,  $A_n(\k)^G \subset D(\mathcal{O}(V^*)^G)$ by restriction of domain. We have $$D(\mathcal{O}(V^*)^G)_S = D(\mathcal{O}(V^*)^G_S)= D(F)$$ by \cite{Muhasky}, Proposition 1.8.
After localization by $S$ we obtain:

\[  \mathcal{B}_S \subset A_n(\k)^G_S \subset D(\mathcal{O}(V^*)^G)_S = D(F). \]

Consider the filtration on $\mathcal{B}_S$ induced from $D(F)$. Since $\mathcal{O}(V^*)^W \subset \mathcal{B}$, we have that $gr \, \mathcal{B}_S$ contains $F \otimes \mathcal{O}(V^*)^G $ as a graded $F$-subalgebra. Since $\mathcal{O}(V^*)$ is finite over $\mathcal{O}(V^*)^G$ then it has the Krull dimension $n$ (\cite{McConnell}).  Applying Lemma \ref{lemma-curious} we have  $\mathcal{B}_S = D(F)$.
We conclude that $\Frac (D(F)) \subset \Frac (A_n(\k)^G) \subset \Frac (D(F))$ which implies the desired equality.
\end{proof}

\begin{remark}
Note that in fact in the above proof we do not need the equality $\mathcal{B}=A_n(\k)^G$ but only the 
embedding $\mathcal{B} \subset A_n(\k)^G$ by \cite{Levasseur}, Lemma 9.
\end{remark}

As a consequence of Lemma \ref{lemma-MAIN} we immediately obtain our main result.

\begin{theorem}\label{thm-main}
The CNP implies the NNP for any linear representation of a finite group over any field of characteristic zero.
\end{theorem}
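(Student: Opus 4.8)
The plan is to deduce Theorem~\ref{thm-main} directly from Lemma~\ref{lemma-MAIN}, which already carries essentially all of the work. Recall that the statement asserts: whenever the CNP holds for a linear action of a finite group $G$ on an $n$-dimensional space, i.e. $\Frac(\mathcal{O}(V^*)^G) \cong \k(x_1,\ldots,x_n)$, then the NNP holds, i.e. $\Frac(A_n(\k)^G) \cong F_n(\k)$. First I would set $F := \Frac(\mathcal{O}(V^*)^G)$, so that by hypothesis $F \cong \k(x_1,\ldots,x_n)$, a purely transcendental extension of $\k$ of transcendence degree $n$. Lemma~\ref{lemma-MAIN} gives, unconditionally, the isomorphism $\Frac(A_n(\k)^G) \cong \Frac(D(F))$. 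Thus the entire theorem reduces to identifying the skew field of fractions of the ring of differential operators on the rational function field $F$.

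The key remaining step is therefore to establish that $\Frac(D(F)) \cong F_n(\k)$ when $F$ is a purely transcendental extension of $\k$ of transcendence degree $n$. For this I would observe that $\k(x_1,\ldots,x_n) = \Frac(\k[x_1,\ldots,x_n])$, and that $D(\k[x_1,\ldots,x_n]) = A_n(\k)$ is the $n$-th Weyl algebra (as noted in the Preliminaries). Then, applying \cite{Muhasky}, Proposition 1.8, exactly as in the proof of Lemma~\ref{lemma-MAIN}, one has $D(F) = D(\Frac(\k[x_1,\ldots,x_n])) = A_n(\k)_{S_0}$, the localization of the Weyl algebra at $S_0 = \k[x_1,\ldots,x_n] \setminus \{0\}$. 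Passing to skew fields of fractions yields $\Frac(D(F)) \cong \Frac(A_n(\k)) = F_n(\k)$, since localizing at an Ore set does not change the skew field of fractions. Combining this with Lemma~\ref{lemma-MAIN} gives $\Frac(A_n(\k)^G) \cong F_n(\k)$, which is precisely the positive solution of the NNP.

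I expect the only genuine subtlety to lie in justifying $\Frac(D(F)) \cong F_n(\k)$ for an \emph{abstract} purely transcendental extension $F/\k$ of transcendence degree $n$, rather than the specific field $\k(x_1,\ldots,x_n)$. The point is that the isomorphism type of $D(F)$, and hence of $\Frac(D(F))$, depends only on $F$ as a $\k$-algebra, so once we know $F$ is $\k$-isomorphic to $\k(x_1,\ldots,x_n)$ we may transport the computation along that isomorphism. Thus the rationality hypothesis of the CNP is used exactly here: it supplies the $\k$-algebra isomorphism $F \cong \k(x_1,\ldots,x_n)$ that lets us replace $D(F)$ by a localization of $A_n(\k)$. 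The main obstacle is thus conceptual rather than computational, namely confirming that the functoriality of $\mathcal{F}$ and the differential-operator construction behave well under this transport; once that is in place the argument is immediate. I would close by remarking that this completes the proof of Theorem~\ref{main}, since the CNP implies rationality of $\mathbb{A}^n(\k)/G$ and the NNP then follows for every field $\k$ of characteristic zero.
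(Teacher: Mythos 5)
Your proposal is correct and follows essentially the same route as the paper: the paper's proof also consists of invoking Lemma~\ref{lemma-MAIN} and then observing that, since $F \cong \k(x_1,\ldots,x_n)$, the ring $D(F)$ is a localization of $A_n(\k)$, whence $\Frac(D(F)) \cong F_n(\k)$. You merely make explicit the details the paper leaves implicit (the appeal to \cite{Muhasky}, Proposition 1.8, and the transport of the computation along the $\k$-algebra isomorphism $F \cong \k(x_1,\ldots,x_n)$), which is a fair elaboration rather than a different argument.
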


\begin{proof}
Under the condition of the theorem we have 
that $F \cong \k(x_1,\ldots,x_n)$. Then $D(F)$ is isomorphic to the localization of $A_n$ by $x_1, \ldots, x_n$ and the statement follows.
\end{proof}

We immediately  have the following application of the main result

\begin{corollary}
The Noncommutative Noether's Problem  holds in the following cases or any field of characteristic zero:
\begin{itemize}
\item for all linear representations of all pseudo-reflection groups;
\item for  alternating groups $\mathcal{A}_n$ with  usual permutation action  for  $n=3,4,5$; 
\item for any group  when $n=3$  and $\k$ is algebraically closed.
\end{itemize}
\end{corollary}

We note that Theorem \ref{thm-main} allows to recover all results for finite groups from \cite{AD1}  and all results from \cite{Eshmatov}. 
Also it allows  us to give a shorter proof of the following fact shown in \cite{AD1}:

\begin{theorem}[\cite{AD1}] If $F_n(\k)^G$ is isomorphic to $F_m(L)$ for some $m$ and some purely transcendental extension $L$ of $\k$ of transcendence degree $t$, then $m=n$ and $t=0$.
\end{theorem}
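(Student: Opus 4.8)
The plan is to separate the two skew fields by a pair of $\k$-algebra isomorphism invariants: the center, and the Gelfand--Kirillov transcendence degree $\mathrm{Tdeg}$. For the right-hand side these are immediate. The Weyl field $F_m(L)$ is the skew field of fractions of $A_m(L) = A_m(\k)\otimes_\k L$, a simple $L$-algebra with center $L$; hence its center is $L$, which has transcendence degree $t$ over $\k$. Since $L$ is central and commutative of transcendence degree $t$ while the Weyl part contributes $2m$, the additivity of $\mathrm{Tdeg}$ gives $\mathrm{Tdeg}\, F_m(L) = 2m + t$.

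Next I would compute the same two invariants for $F_n(\k)^G = \Frac(A_n(\k)^G)$. By Lemma \ref{lemma-MAIN} we have $F_n(\k)^G \cong \Frac(D(F))$, where $F = \Frac(\mathcal{O}(V^*)^G)$. Because $\mathcal{O}(V^*) = \k[x_1,\dots,x_n]$ is finite over its invariant subring, $\mathrm{trdeg}_\k F = n$. For the center: an element central in $\Frac(D(F))$ commutes with all of $F$ and with every $\k$-linear derivation of $F$, so by an order-of-operator argument it lies in $F$ and is annihilated by each derivation, i.e. it is a constant of $F$. In characteristic zero the field of constants of $F$ is the algebraic closure of $\k$ in $F$, and since $F \subseteq \k(x_1,\dots,x_n)$ and $\k$ is algebraically closed in the rational function field, this closure is $\k$ itself. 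Thus the center of $F_n(\k)^G$ is $\k$. Finally, $D(F)$ is a Noetherian domain of Gelfand--Kirillov dimension $2n$, whence $\mathrm{Tdeg}\, F_n(\k)^G = \mathrm{Tdeg}\,\Frac(D(F)) = 2n$.

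With both invariants in hand the conclusion is a short comparison. A $\k$-algebra isomorphism $F_n(\k)^G \cong F_m(L)$ carries center onto center, so $\k \cong L$ over $\k$; comparing transcendence degrees over $\k$ forces $t = 0$, i.e. $L = \k$. The same isomorphism preserves $\mathrm{Tdeg}$, so $2n = 2m + t = 2m$, giving $m = n$.

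The main obstacle is not the final comparison but the justification of the invariant values themselves. Two points require care: first, that the center of $\Frac(D(F))$ really collapses to the constants of $F$ (the order-filtration argument valid in $D(F)$ must be transported into its skew field of fractions), and second, that $\mathrm{Tdeg}$ takes the asserted values and is additive over the central scalar extension $A_m(\k)\otimes_\k L$ and unchanged under passage to $F_n(\k)^G$ (which is harmless here because $F_n(\k)$ is finite-dimensional over it). Both are standard properties of the Gelfand--Kirillov transcendence degree, so once they are invoked the argument is brief; this brevity is precisely the simplification afforded by Lemma \ref{lemma-MAIN}.
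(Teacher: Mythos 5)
Your proposal is correct and follows essentially the same route as the paper: both identify $F_n(\k)^G$ with $\Frac(D(F))$ via Lemma \ref{lemma-MAIN} and then compare the two isomorphism invariants, the center (forcing $t=0$) and the Gelfand--Kirillov transcendence degree (forcing $m=n$). The only difference is that you verify these invariant computations by hand, whereas the paper delegates them to citations (Dumas, Lemma 3.2.2 for the Weyl field $F_m(L)$, and McConnell--Robson 15.2.4 for $\Frac(D(F))$, after presenting $F$ via the primitive element theorem).
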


\begin{proof}
 We have that $F_m(L)\simeq \Frac (D(F))$. Now  use  \cite{Dumas}, Lemma 3.2.2.
 The  center of $F_m(L)$ has the transcendence degree $t$ over $\k$. By the primitive element theorem, $F=\k (y_1,\ldots, y_n) (f)$, for certain algebraically independent $y_1,\ldots,y_n$, and by \cite{McConnell} 15.2.4, the second skew-field has center of transcendence degree 0. So $t=0$ and $m=n$. 
\end{proof}

\section
{Complex $\k $ case}

In this section we prove Theorem \ref{main1}  and Theorem \ref{main2}.

Suppose $X$ is an affine irreducible variety.  Fix  a finite group $G$ acting on $X$ and satisfying the hypnotises of the theorem.
By Theorem \ref{C-H}, 1), there exists an open dense subset $V$ on which the action of $G$ is free and such that the quotient map $\pi: X \rightarrow X/G$ restricts to the quotient $\pi:V \rightarrow V'$, where $V'$ is open dense in $X/G$. By the Hilbert-Noether theorem, the map $\pi$ is finite, hence affine (\cite{Hartshorne}, Exercise 5.17). Let $W'$ be a principal open subset of  $V'$. Since $\pi$ is  affine,  then $W=\pi^{-1}(W')$ is affine.     Since $W$ is a union of orbits,  $G$ restricts to a free action on it. We now  have a quotient map $\pi: W \rightarrow W'$ with $W$ affine and smooth (hence normal). Also, since $W' \subset X/G$, then $W'$ is birational to 
$Y$. 
Applying Lemma \ref{main-lemma} we obtain

\begin{lemma} \label{lemma-intermediate}
$\Frac (D(W)^G) \cong \Frac (D(Y))$.
\end{lemma}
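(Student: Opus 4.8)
The plan is to recognize Lemma \ref{lemma-intermediate} as a direct instance of Lemma \ref{main-lemma}, applied to the variety $W$ rather than to $X$ itself; all the substantive content has already been absorbed into that earlier lemma, so the work is purely in verifying its hypotheses for $W$. First I would check that $W$ is an irreducible affine variety: since $X$ is irreducible and $W = \pi^{-1}(W')$ is a nonempty open subset of $X$, it is irreducible, and it was arranged in the construction above to be affine (as the preimage under the affine morphism $\pi$ of the principal open $W' \subset V'$). Moreover $W$ is a union of $G$-orbits, so $G$ acts on $W$ by automorphisms, and this action is free because it restricts the free action on $V \supseteq W$ guaranteed by Theorem \ref{C-H}, part 1). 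Thus condition $\ddagger$ of Theorem \ref{C-H} holds for $W$, which is exactly one of the two alternatives demanded by Lemma \ref{main-lemma}.

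The second ingredient is the identification of the quotient $W/G$ together with its birational type. Since $\pi\colon W \to W'$ is the restriction of the quotient map and $W$ is saturated ($W = \pi^{-1}(W')$), the image $W'$ is the geometric quotient $W/G$, which is affine because $W$ is affine and $G$ is finite. It then remains to trace the birational equivalence: $W'$ is a principal open subset of $V'$, and $V'$ is open dense in $X/G$, so $W'$ is itself open dense in $X/G$ and hence birational to $X/G$. Because $X/G$ is birationally equivalent to the affine irreducible variety $Y$ by the hypothesis of Theorem \ref{main1}, composing the two equivalences shows that $W/G = W'$ is birationally equivalent to $Y$.

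With these points in hand I would invoke Lemma \ref{main-lemma} with $W$ in place of $X$ and the same $Y$, obtaining $\Frac(D(W)^G) \cong \Frac(D(Y))$. I do not anticipate a genuine obstacle: the analytic heart of the matter (the isomorphism $D(X/G) \cong D(X)^G$ of Theorem \ref{C-H} together with the localization argument of \cite{Muhasky}) is already packaged inside Lemma \ref{main-lemma}, so the only real task is the bookkeeping of the previous paragraphs --- ensuring that $W$ is affine, irreducible, carries a free $G$-action, and that its quotient lies birationally over $Y$. The single point deserving care is the transitivity of birational equivalence along the chain from $W'$ through $V'$ to $X/G$ and then to $Y$; this is immediate since open dense inclusions induce birational equivalences and birational equivalence is transitive.
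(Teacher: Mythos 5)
Your proposal is correct and follows essentially the same route as the paper: it takes the set $W$ constructed before the lemma, verifies that it is irreducible, affine, carries a free $G$-action (condition $\ddagger$), and that $W/G = W'$ is birational to $Y$ via the dense open inclusion $W' \subset X/G$, and then applies Lemma \ref{main-lemma}. The only cosmetic difference is that the paper additionally remarks that $W$ is smooth, hence normal (which would give $\dagger$), whereas you rely solely on the freeness of the action, which indeed suffices.
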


For $f\in \mathcal{O}(X)$ denote $Spec \, \mathcal{O}(X)_f$ the  principal open subset. These sets constitute a basis of the Zariski topology, and hence there exists a principal open subset  $Spec \, \mathcal{O}(X)_h \subset W$ for $h\in \Lambda$. 
Set $f=\prod_{g \in G} g.h$. Then $f$ is $G$-invariant. Thus we have

\begin{lemma}\label{lem-f}
There exists a principal open set $Spec \, \mathcal{O}(X)_f\subset W$ with $G$-invariant $f$.
\end{lemma}

 Now we generalize the argument given in the proof for  unitary reflection groups in \cite{Eshmatov}.
By Lemma \ref{lem-f}
there exists a principal open set $Spec \, \mathcal{O}(X)_f\subset W$ with $G$-invariant $f$. Then
we have the following inclusions of varieties: $\Spec \, \mathcal{O}(X)_f \subset W \subset X$.  Let $D(.)$ be the sheaf of differential operators functor 
which associates the ring of differential operators to a given variety. Functor $D(.)$ is contravariant and 
 we have chain of inclusions $$ D(\mathcal{O}(X)) \subset D(W) \subset D(\mathcal{O}(X)_f)=D(\mathcal{O}(X))_f$$
 (cf. \cite{Liu}, Proposition 2.4.18). Taking the field of fractions, and then the $G$-invariants, we have the following chain:

$$\Frac  D(\mathcal{O}(X))^G \subset \Frac (D(W))^G \subset \Frac (D(\mathcal{O}(X))_f)^G$$ 
$$=  \Frac (D(\mathcal{O}(X))^G_f) = \Frac( D(\mathcal{O}(X))^G).$$

Then applying Lemma \ref{lemma-intermediate} we have
$$\Frac (D(X)^G) \simeq \Frac (D(W)^G) \cong \Frac (D(Y)),$$   which implies the statement of Theorem \ref{main1}. Theorem \ref{main2} follows immediately, since in this case we have $Y =\mathbb{A}^n(\k)$, $n=dim \, X$.

\section{NNP for pseudo-reflection groups}

 By the Chevalley-Shephard-Todd theorem the CNP holds 
for all pseudo-reflection groups over any field of zero characteristic.
In this section we give an alternative proof that the Noncommutative Noether's Problem has a positive solution for all pseudo-reflection groups over any field of zero characteristic, which is of independent interest.
For complex reflection groups this was shown in \cite{Eshmatov}, Theorem 2.

As before $\Lambda$ denote the polynomial algebra over $\k$ with $n$ variables. Let $W$ be an arbitrary pseudo-reflection group acting by linear automorphisms on $\Lambda$.
Recall the following statement [\cite{Eshmatov}, Proposition 1] which does not depend on the field $\k$:

\begin{proposition} \label{elem} Let $\Delta$ be a $W$-invariant element of $\Lambda$,  $S$  a multiplicatively closed set in  $\Lambda$. Then
\begin{enumerate}
\item
$(\Lambda_\Delta)^W = (\Lambda^W)_\Delta$;
\item
$D(\Lambda_S) = D(\Lambda)_S$;
\item
$(D(\Lambda)_\Delta)^W \cong (D(\Lambda)^W)_\Delta$.
\end{enumerate}
\end{proposition}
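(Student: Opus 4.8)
The plan is to prove the three statements in the stated order, since (3) will follow by combining (1) and (2) with the description of $D(\Lambda)^W$ as the subalgebra generated by $\Lambda^W$ and the invariant constant-coefficient operators. For part (1), the plan is to use that $\Delta$ is $W$-invariant, so the localization $\Lambda_\Delta$ carries a $W$-action extending that on $\Lambda$. The containment $(\Lambda^W)_\Delta \subseteq (\Lambda_\Delta)^W$ is immediate because invariant numerators over powers of the invariant $\Delta$ are fixed by $W$. For the reverse, I would take an arbitrary $W$-invariant element $a/\Delta^m \in (\Lambda_\Delta)^W$ and, after clearing denominators (legitimate since $\Delta$ is invariant and a nonzerodivisor), apply the Reynolds operator $\tfrac{1}{|W|}\sum_{g\in W} g$ to conclude that we may take the numerator $a$ to lie in $\Lambda^W$; here characteristic zero is used so that $|W|$ is invertible.

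For part (2), the identity $D(\Lambda_S) = D(\Lambda)_S$ is the standard fact that differential operators commute with localization at a multiplicatively closed set: this is precisely the content cited earlier as \cite{Muhasky}, Proposition 1.8 (and \cite{Liu}, Proposition 2.4.18), applied to the smooth affine variety $\Spec \Lambda = \mathbb{A}^n$. Concretely, every differential operator on $\Lambda_S$ of order $\le k$ is determined by its action on $\Lambda$ together with the quotient rule for differentiating $1/s$, so it extends uniquely from an operator on $\Lambda$ after inverting $S$; I would only need to check that the filtration by order is respected, which is routine.

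For part (3), I would take $S = \{\Delta^m : m \ge 0\}$ in part (2) to get $D(\Lambda_\Delta) = D(\Lambda)_\Delta$, a $W$-equivariant identification since $\Delta$ is invariant. Taking $W$-invariants gives $(D(\Lambda)_\Delta)^W = (D(\Lambda_\Delta))^W = D(\Lambda_\Delta)^W$, so it remains to identify this with $(D(\Lambda)^W)_\Delta$. The inclusion $(D(\Lambda)^W)_\Delta \subseteq (D(\Lambda)_\Delta)^W$ is clear; for the reverse I would again clear the denominator $\Delta^m$ of an invariant operator and average via the Reynolds operator, exactly as in part (1), observing that $\Delta^m$ is central enough (it is invariant, hence commutes with the averaging in the sense that $g(\Delta^{-m} d) = \Delta^{-m}\, g(d)$) to let the invariance pass to the numerator.

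The main obstacle I anticipate is part (3): one must be careful that the $W$-action on $D(\Lambda)_\Delta$ is the one induced by conjugation $g \ast d = g \circ d \circ g^{-1}$ rather than a naive action on fractions, and verify that localizing at the invariant $\Delta$ is genuinely compatible with this conjugation action. The key point making this work is that $\Delta$ being $W$-invariant forces $g \ast (\Delta^{-m}) = \Delta^{-m}$, so conjugation commutes with multiplication by $\Delta^{-m}$ and the averaging argument applies to numerators without obstruction.
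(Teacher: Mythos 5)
Your proof is essentially correct, but note that the paper itself does not prove Proposition \ref{elem}: it is recalled verbatim from \cite{Eshmatov}, Proposition 1 ("Recall the following statement\dots"), so there is no internal proof to compare against. Your argument supplies what that citation hides, and it follows the standard route: part (2) is the localization theorem for differential operators (the same fact this paper invokes elsewhere via \cite{Muhasky}, Proposition 1.8), while parts (1) and (3) follow from $W$-equivariance of localization at the invariant element $\Delta$ together with Reynolds averaging. Three small remarks. First, in part (1) the Reynolds operator (and hence the characteristic-zero hypothesis) is not actually needed: since $\Lambda_\Delta$ is a domain and $\Delta$ is invariant, $g(a)/\Delta^m = a/\Delta^m$ forces $g(a)=a$ directly. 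Second, in part (3) you implicitly write every element of $D(\Lambda)_\Delta$ as $\Delta^{-m}d$ with $d\in D(\Lambda)$; this requires knowing that $\{\Delta^m : m\ge 0\}$ is an Ore set in $D(\Lambda)$, which holds because $\mathrm{ad}(\Delta)$ acts locally nilpotently on $D(\Lambda)$ --- exactly the fact the paper uses for the set $S=\mathcal{O}(V^*)^G\setminus\{0\}$, citing \cite{KL}, Theorem 4.9; you should make this explicit. Third, your opening plan appeals to the description of $D(\Lambda)^W$ as generated by $\Lambda^W$ and the invariant constant-coefficient operators, but your actual argument never uses it --- rightly so, since that is Levasseur's theorem and is both unnecessary and much deeper than anything required here.
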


Consider  a $W$-invariant element $\Delta\in \Lambda$, localization $\Lambda_\Delta$ with the induced action of $W$ and the $W$-invariants $\Lambda_\Delta^W$ in $\Lambda_\Delta$. 
We have an embedding   $\Lambda_\Delta^W \rightarrow \Lambda_\Delta$.  By the restriction of domain we have an induced map $$ \phi_{\Delta}: D(\Lambda_\Delta)^W \rightarrow D(\Lambda_\Delta^W).$$

\begin{proposition} \label{dagger}
Let $\Delta$ be a $W$-invariant element in $\Lambda$. Then the map $\phi_{\Delta}$ is injective.
\end{proposition}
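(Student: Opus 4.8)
The map $\phi_\Delta$ sends a $W$-invariant operator $d\in D(\Lambda_\Delta)^W$ to its restriction $d|_{\Lambda_\Delta^W}$: since $g\circ d\circ g^{-1}=d$, the operator $d$ preserves the invariant subring $\Lambda_\Delta^W$ and restricts there to a differential operator. Thus injectivity of $\phi_\Delta$ is precisely the assertion that a $W$-invariant $d$ with $d(\Lambda_\Delta^W)=0$ must vanish. The plan is to prove the (a priori stronger) statement that \emph{any} $d\in D(\Lambda_\Delta)$ with $d|_{\Lambda_\Delta^W}=0$ is zero, so that $W$-invariance is not even needed.

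The first step is a commutator induction on the order of $d$. For $c\in\Lambda_\Delta^W$ the commutator $[d,c]=d\circ c-c\circ d$ has order strictly less than that of $d$, and for every $x\in\Lambda_\Delta^W$ one computes $[d,c](x)=d(cx)-c\,d(x)=0$, since $cx,x\in\Lambda_\Delta^W$; hence $[d,c]$ again kills $\Lambda_\Delta^W$. The base case of order $0$ is immediate: then $d\in\Lambda_\Delta$ acts by multiplication and $d\cdot 1=0$ forces $d=0$. By induction $[d,c]=0$ for all $c\in\Lambda_\Delta^W$, i.e. $d$ commutes with multiplication by $\Lambda_\Delta^W$ and is therefore $\Lambda_\Delta^W$-linear.

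The second step passes to symbols. Since $\Lambda_\Delta$ is a localization of a polynomial ring it is regular, so $\gr{D(\Lambda_\Delta)}\cong\Lambda_\Delta[\xi_1,\dots,\xi_n]$ with its canonical Poisson bracket. If $d$ had order $m\ge 1$ with principal symbol $\sigma_m=\sigma_m(d)$, then the relation $[d,c]=0$ would give, at the level of top symbols, $\{\sigma_m,c\}=\sum_i \partial_{\xi_i}\sigma_m\cdot\partial_i(c)=0$ for every $c\in\Lambda_\Delta^W$. The crucial point is that the differentials of the invariants span the cotangent space at the generic point: because $\k$ has characteristic zero, the extension $K/K^W$ of fraction fields ($K=\Frac(\Lambda)$) is finite separable, so $\Omega^1_{K^W/\k}\otimes_{K^W}K\cong\Omega^1_{K/\k}$ and the vectors $(\partial_1 c,\dots,\partial_n c)$, $c\in\Lambda_\Delta^W$, span $K^n$. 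Consequently $\partial_{\xi_i}\sigma_m=0$ in $K[\xi_1,\dots,\xi_n]\supseteq\Lambda_\Delta[\xi_1,\dots,\xi_n]$ for all $i$, and Euler's identity $m\,\sigma_m=\sum_i\xi_i\,\partial_{\xi_i}\sigma_m$ forces $\sigma_m=0$, a contradiction. Hence $d$ has order $0$, and the base case gives $d=0$.

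The commutator computation and the identification of the associated graded ring are routine. The genuine content, and the only place where characteristic zero enters, is the separability of $K/K^W$: it guarantees that the invariants already detect every cotangent direction, which is exactly what forces $d$ to have order zero rather than merely to be $\Lambda_\Delta^W$-linear. I expect this cotangent-spanning fact to be the main obstacle to formulate cleanly, and I would isolate it as the key lemma before assembling the two steps above.
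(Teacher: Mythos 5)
Your proof is correct, and it takes a genuinely different route from the paper's. The paper disposes of the proposition in two lines of ring theory: $D(\Lambda_\Delta)$ is a simple ring on which $W$ acts by outer automorphisms, hence the fixed ring $D(\Lambda_\Delta)^W$ is again simple by \cite{Montgomery}, Corollary 2.6, and a nonzero homomorphism out of a simple ring has zero kernel. You instead prove the stronger statement that \emph{every} $d\in D(\Lambda_\Delta)$, invariant or not, which annihilates $\Lambda_\Delta^W$ must vanish: commutator induction on the order reduces to the case where $d$ centralizes $\Lambda_\Delta^W$, and then the symbol calculus on $\gr{D(\Lambda_\Delta)}\cong\Lambda_\Delta[\xi_1,\dots,\xi_n]$, combined with the fact that in characteristic zero the differentials of invariants span the generic cotangent space (separability of $\Frac(\Lambda)/\Frac(\Lambda)^W$, together with the routine identification $\Frac(\Lambda_\Delta^W)=\Frac(\Lambda)^W$), forces the principal symbol to vanish via Euler's identity. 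What the paper's route buys is brevity, at the cost of two nontrivial inputs that are cited or asserted rather than proved (simplicity of $D(\Lambda_\Delta)$ and outerness of the $W$-action on it); what yours buys is a self-contained argument, a sharper conclusion (injectivity of the restriction map on all of $D(\Lambda_\Delta)$, so that $W$-invariance plays no role in injectivity), validity for an arbitrary finite group rather than only a pseudo-reflection group, and a transparent localization of where characteristic zero enters. Your instinct to isolate the cotangent-spanning property as the key lemma is exactly right: it is the substitute, in your approach, for Montgomery's fixed-ring theorem in the paper's.
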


\begin{proof}
Note that  $D(\Lambda_\Delta)$ is a simple ring and $W$ acts by outer automorphisms.   Then $D(\Lambda_\Delta)^W$ is a simple ring, by \cite{Montgomery}, Corollary 2.6. 
Since $\phi_{\Delta}$ is not trivial, it is injective. 
\end{proof}

Our goal now is to find an adequate $\Delta$ such that the $\phi_{\Delta}$ is surjective. The case  $W= S_n$ was considered in \cite{FS}.

\subsection{Proof of the NNP for irreducible pseudo-reflection groups}
We proceed by considering first irreducible pseudo-reflection groups.  
Recall that
a pseudo-reflection group $W$ is called \emph{irreducible} if its natural representation is irreducible.

We will make use of the following notion of the \emph{field of definition} of $G$ - the smallest subfield where a representation of the group $G$ is defined.
More precisely,

\begin{definition}
Let $\rho: G \rightarrow GL_n(\k)$ be a linear representation of a finite group $G$. Let $\k' \subset \k$ be a subfield. Suppose there exists a homomorphism $\rho': G \rightarrow GL_n(\k')$ such that $\rho$ can be obtained from $\rho'$ by the extension of scalars.
We say that $\rho$ has $\k'$ as the field of definition if $\k'$ is the smallest subfield with this property. 
\end{definition}

Given a linear representation $\rho: G \rightarrow GL_n(\k)$ denote by $\chi_{\rho}$ the corresponding character function.  Let $\mathbb{Q}(\chi)$ be the field
 extension of $\mathbb{Q}$ by $Im \, \chi$.    

By \cite[Appendix B]{Kane},  we have

\begin{proposition} \label{irred} Let $W$ be an irreducible pseudo-reflection group and
 $\rho: W \rightarrow GL_n(\k)$ a representation of $W$. Then $\rho$ has $\mathbb{Q}(\chi)$ as the field of definition.
\end{proposition}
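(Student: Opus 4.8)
The plan is to prove the two inclusions separately, writing $\chi:=\chi_\rho$ and $K:=\mathbb{Q}(\chi)$. Note first that the values $\chi(g)$ are sums of $|W|$-th roots of unity, so $K$ is a number field contained in $\k$. The inclusion $K\subseteq\k'$ for every field of definition $\k'$ is immediate: if $\rho$ is realized by matrices over $\k'$ then each $\chi(g)=\mathrm{tr}\,\rho(g)$ lies in $\k'$. The substance is the opposite direction, namely that $\rho$ is already defined over $K$. Since $\chara\,\k=0$, Maschke's theorem makes every representation of $W$ semisimple, so over any field of this characteristic a representation is determined up to isomorphism by its character. Hence it will suffice to produce a single representation over $K$ whose character is $\chi$: extending its scalars to $\k$ then yields a $\k$-representation with character $\chi$, forcibly isomorphic to $\rho$, whence $\rho$ is defined over $K$ and, together with the first inclusion, $K$ is the smallest field of definition.

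To build such a $K$-model I would decompose over $\overline{\mathbb{Q}}$. Write $\chi=\sum_\sigma a_\sigma\chi_\sigma$, the sum running over the irreducible $\overline{\mathbb{Q}}$-characters $\chi_\sigma$ of $W$, with $a_\sigma=\langle\chi,\chi_\sigma\rangle\in\mathbb{Z}_{\ge 0}$. As $\chi$ takes values in $K$ it is fixed by $\mathrm{Gal}(\overline{\mathbb{Q}}/K)$; this group permutes the $\chi_\sigma$ and preserves the inner product, so the multiplicities $a_\sigma$ are constant along each orbit $O$ of $\mathrm{Gal}(\overline{\mathbb{Q}}/K)$ on irreducible characters. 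Thus $\chi=\sum_O a_O\,\psi_O$, where $\psi_O=\sum_{\sigma\in O}\chi_\sigma$ and $a_O$ is the common multiplicity. By the standard Wedderburn decomposition of the semisimple algebra $K[W]$, the irreducible $K$-characters are exactly the $m_O\,\psi_O$, where $m_O=m_{K(\chi_\sigma)}(\sigma)$ is the Schur index of $\sigma$ over $K(\chi_\sigma)$.

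This is the point at which I invoke \cite[Appendix B]{Kane}: for an irreducible pseudo-reflection group $W$, every irreducible $\overline{\mathbb{Q}}$-representation $\sigma$ is realizable over its own character field $\mathbb{Q}(\chi_\sigma)$, that is $m_{\mathbb{Q}(\chi_\sigma)}(\sigma)=1$. Since the Schur index can only drop under field extension, $m_O=m_{K(\chi_\sigma)}(\sigma)$ divides $m_{\mathbb{Q}(\chi_\sigma)}(\sigma)=1$, so $m_O=1$ and each $\psi_O$ is itself an irreducible $K$-character. Consequently $\chi=\sum_O a_O\,\psi_O$ is a non-negative integral combination of irreducible $K$-characters, hence the character of a genuine representation over $K$. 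By the reduction of the first paragraph, $\rho$ is then defined over $K=\mathbb{Q}(\chi)$, which is the assertion.

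The essential input is the Schur-index statement, which I import from \cite[Appendix B]{Kane} rather than reprove; it packages the Benard--Benson rationality phenomenon (Weyl-group representations are already rational over $\mathbb{Q}$, and for the remaining real and complex irreducible pseudo-reflection groups every irreducible is defined over its character field). The one place the reduction itself must be handled with care is the reducible case: when Galois-conjugate constituents occur together in $\rho$, the character field $\mathbb{Q}(\chi)$ can be strictly smaller than the compositum of the constituents' character fields, and the orbit bookkeeping above is precisely what guarantees that $\rho$ descends all the way to $\mathbb{Q}(\chi)$ and not merely to that larger compositum.
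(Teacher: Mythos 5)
Your proof is correct, but note that the paper itself offers no argument for this proposition: it is stated as a bare citation of \cite[Appendix B]{Kane}, so there is no internal proof to compare against. What you have done is reconstruct the derivation from the primitive ingredient in that appendix, namely the Benard/Clark--Ewing theorem that every irreducible character of an irreducible pseudo-reflection group has Schur index one over its character field. Your surrounding bookkeeping is sound and is exactly what is needed to promote that statement about irreducibles to the general statement: semisimplicity in characteristic zero so that a representation is determined by its character; the $\mathrm{Gal}(\overline{\mathbb{Q}}/\mathbb{Q}(\chi))$-orbit decomposition forcing multiplicities to be constant along orbits (using that the absolutely irreducible characters of $W$ over $\overline{\k}$ may be identified with those over $\overline{\mathbb{Q}}$, all values being cyclotomic); the identification of the irreducible $\mathbb{Q}(\chi)$-characters as $m_O\psi_O$; and the divisibility of Schur indices under base extension to kill $m_O$. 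Your closing remark is also the right caution --- for reducible $\rho$ with Galois-conjugate constituents, $\mathbb{Q}(\chi)$ can be strictly smaller than the compositum of the $\mathbb{Q}(\chi_\sigma)$, and the orbit argument is precisely what achieves descent to $\mathbb{Q}(\chi)$ itself. The one point to verify when finalizing the citation is that what you import from \cite{Kane} is the Schur-index-one statement for \emph{all} irreducible characters (Benard's theorem), not merely the realizability of the reflection representation over its own character field; the former is what the appendix records, but with only the latter your argument would need Benard or Bessis cited separately.
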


We shall also need the following fact from the invariant theory of pseudo-reflection groups.
Let $M$ be the $n \times n$ matrix whose $ij$'s entry is $\partial_{x_j}e_i$, where $\Lambda^W= \k[x_1,\ldots,x_n]^W = \k[e_1,\ldots, e_n]$. Let  $J'$ be the determinant of $M$.

Let $\mathcal{S}$ be the set of all pseudo-reflections in $W$. Each $s \in \mathcal{S}$ fixes a hyperplane $H_s$. Let $L_s$ be a linear form whose kernel is $H_s$ for each $s \in \mathcal{S}$. 
Set $J = \prod_{s \in \mathcal{S}} L_s$. It has the following properties:

\begin{proposition}\label{quasi-invariant}[\cite{Kane}, 20-2, Proposition A and B, 21-1, Proposition A and B]
$J \neq 0$ and  $w.J=det(w)J$ for every $w \in W$. Moreover, $J$ is a multiple of  $J'$.
\end{proposition}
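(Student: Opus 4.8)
The plan is to prove the three assertions by pivoting on the Jacobian $J'$, whose transformation behaviour is the most transparent, and then to transfer the conclusions to $J$. The nonvanishing $J\neq 0$ is immediate, since each $L_s$ is a nonzero linear form and $\Lambda$ is a domain; and $J'\neq 0$ because, in characteristic zero, the algebraically independent basic invariants $e_1,\dots,e_n$ have nonsingular Jacobian.

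First I would establish the transformation law $w.J'=\det(w)\,J'$. Writing the linear action of $w\in W$ on $V^{*}$ as a matrix and differentiating, a routine chain-rule computation gives $\partial_{x_j}(w.f)=\sum_k b_{kj}\,w.(\partial_{x_k}f)$ for every $f\in\Lambda$, where $B=(b_{kj})$ is the matrix of $w^{-1}$ acting on $V$. Substituting the invariants $f=e_i$, for which $w.e_i=e_i$, shows that $\bigl(w.(\partial_{x_j}e_i)\bigr)_{ij}$ is obtained from $M$ by right multiplication by a fixed invertible matrix of determinant $\det(w)$; taking determinants yields $w.J'=\det(w)\,J'$, so $J'$ is a nonzero \emph{anti-invariant}.

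The heart of the proof is the divisibility $J\mid J'$. Fix a reflecting hyperplane $H$ and let $C_H\leq W$ be its pointwise stabilizer; by Steinberg's theorem it is generated by the reflections fixing $H$, and since these embed into $\k^{\times}$ via their eigenvalue on the line complementary to $H$, the group $C_H$ is cyclic of order $e_H$, consisting of the identity and the $e_H-1$ pseudo-reflections with hyperplane $H$. I would choose coordinates (extending scalars to contain the relevant root of unity if needed, which does not affect divisibility over $\k$) so that $L_H=x_1$ and a generator $c$ of $C_H$ acts by $c.x_1=\zeta^{-1}x_1$ and $c.x_i=x_i$ for $i\geq 2$, with $\zeta=\det c$ a primitive $e_H$-th root of unity. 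Expanding $J'$ in powers of $x_1$ with coefficients in $\k[x_2,\dots,x_n]$, each monomial $x_1^{k}(\cdots)$ is scaled by $\zeta^{-k}$ under $c$, whereas anti-invariance forces the scalar $\det c=\zeta$; hence $\zeta^{-k}=\zeta$, i.e. $k\equiv-1\pmod{e_H}$, so every exponent satisfies $k\geq e_H-1$ and $L_H^{\,e_H-1}\mid J'$. As distinct hyperplanes give coprime linear forms, these factors combine to $J=\prod_H L_H^{\,e_H-1}\mid J'$.

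A degree count then closes the proposition. Both $\deg J=|\mathcal{S}|=\sum_H(e_H-1)$ and $\deg J'=\sum_i(d_i-1)$, where $d_i=\deg e_i$, equal the number of pseudo-reflections of $W$; hence $\deg J=\deg J'$, and combined with $J\mid J'$ this forces $J'=c\,J$ for a nonzero $c\in\k$, which is the assertion that $J$ is a scalar multiple of $J'$. Transporting $w.J'=\det(w)\,J'$ across this equality gives $w.J=\det(w)\,J$ for all $w\in W$. I expect the divisibility step to be the main obstacle, as it is there that the structure theory of pseudo-reflection groups (cyclicity of $C_H$, the eigenvalue analysis at each hyperplane, and the count $\sum_i(d_i-1)=|\mathcal{S}|$) really enters. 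An alternative route to the second assertion that bypasses $J'$ is to note that $W$ permutes the forms $L_s$ up to scalars, so $w\mapsto c_w$ with $w.J=c_wJ$ is a character of $W$; evaluating it on a single pseudo-reflection $s$ by an orbit-by-orbit bookkeeping of its action on the hyperplanes yields $c_s=\zeta^{-(e_H-1)}=\det(s)$, whence $w.J=\det(w)\,J$ again.
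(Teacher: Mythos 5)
Your proof is correct. The paper itself offers no proof of this proposition --- it is imported verbatim from the cited reference (Kane, 20-2 and 21-1) --- and your argument is essentially the standard one found there: anti-invariance of the Jacobian $J'$ via the chain rule, divisibility $L_H^{e_H-1}\mid J'$ via the eigenvalue expansion with respect to the cyclic pointwise stabilizer $C_H$, and the degree count $\sum_i (d_i-1)=|\mathcal{S}|$ to upgrade divisibility to equality up to a nonzero scalar. Two minor remarks: your appeal to Steinberg's theorem and your scalar extension are both unnecessary (though harmless), since in characteristic zero any non-identity finite-order element fixing $H$ pointwise is automatically a pseudo-reflection with hyperplane $H$, and $\zeta=\det c$ already lies in $\k$; also, your external input $\sum_i(d_i-1)=|\mathcal{S}|$ is legitimately non-circular, being standardly proved via Molien's formula independently of the Jacobian factorization.
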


As in the case of complex reflection groups set $\Delta=J^{|W|}$ (\cite{Eshmatov}, Section 3).

Let $E_i$, $i=1, \ldots, n$ be the column vector, where we have $1$ in the $i$th position and $0$ in all others. Let \[ F_i =  \left( \begin{array}{c} f_{i1} \\ \vdots \\ f_{in} \end{array}\right) \] be a solution of the linear system  $M F_i = E_i$. By the Kramer's rule, $f_{ij} \in \Lambda_J$, $1 \leq i,j \leq n$, where $\Lambda_J$ is the localization of $\Lambda$ by $J$.

For each $i=1, \ldots, n$ set $d_i = \sum_{k=1}^n f_{ik}\partial_k$. Then  $d_i \in D(\Lambda_\Delta) = D(\Lambda)_\Delta$ and we have $d_i(e_j) = \delta_{ij}$, $i,j=1, \ldots, n$.

We will show  that all differential operators $d_i$, $i=1, \ldots, n$ are  $W$-invariant. By Theorem \ref{irred} we can assume that $e_i$'s, and hence $d_i$'s, have coefficients in $\mathbb{Q}(\chi)$.
Observe the following: let $\k' \subset \k$ be a subfield fixed by $W$ and $d$ a differential operator with coefficients in $\k'$, then the question of $W$-invariance of $d$ is the same, weather we consider the base field $\k$ or $\k'$. 
As $\mathbb{Q}(\chi)$ is fixed by $W$, to show that the $d_i$'s are invariant differential operators on $\Lambda_\Delta^W$, we can replace $\k$ by $\mathbb{Q}(\chi)$. Now our field of definition is a subfield of $\mathbb{C}$.
Repeating the above argument we can assume that $\k = \mathbb{C}$.

Recall the following result of Knop:

\begin{theorem}\label{thm-knop}[\cite{Knop}, Theorem 3.1]
Let $X$ be a complex affine irreducible  normal variety. Then $D(X)^W=\{d \in D(X) | d(\mathcal{O}(X)^W) \subseteq \mathcal{O}(X)^W \}$.
\end{theorem}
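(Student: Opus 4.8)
Write $R=\mathcal{O}(X)$ and recall that $W$ acts on $D(X)$ by $g\ast d=g\circ d\circ g^{-1}$, so that $d\in D(X)^W$ means precisely that $d$ commutes with the action of every $g\in W$ on $R$. The plan is to prove the two inclusions separately. The inclusion $D(X)^W\subseteq\{d\in D(X): d(R^W)\subseteq R^W\}$ is immediate: if $d\in D(X)^W$ and $f\in R^W$, then $g(d(f))=g(d(g^{-1}f))=(g\ast d)(f)=d(f)$ for every $g$, so $d(f)\in R^W$. The content of the theorem is therefore the reverse inclusion.

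For the reverse inclusion I would first isolate the following injectivity lemma: the restriction map $D(X)\to\End_{\mathbb{C}}(R^W)$, $d\mapsto d|_{R^W}$, is injective. Granting this, let $d$ satisfy $d(R^W)\subseteq R^W$ and fix $g\in W$. For $f\in R^W$ one computes $(g\ast d)(f)=g(d(g^{-1}f))=g(d(f))=d(f)$, using $g^{-1}f=f$ and $d(f)\in R^W$. Hence $g\ast d$ and $d$ restrict to the same operator on $R^W$, so the lemma forces $g\ast d=d$; as $g$ is arbitrary, $d\in D(X)^W$. Thus everything reduces to the lemma.

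The proof of the injectivity lemma is where I expect the real work to be. First I would embed $D(X)$ into $D(L)$, $L=\Frac(R)$, by extending operators to the fraction field through the Leibniz rule; this is injective since $R\subseteq L$. As $W$ is finite and $R$ is a finitely generated $\mathbb{C}$-algebra, $R$ is a finite $R^W$-module, so $\Frac(R^W)\subseteq L$ is a finite extension and $R^W$ has transcendence degree $n=\dim X$ over $\mathbb{C}$; in characteristic zero I may then choose algebraically independent $t_1,\dots,t_n\in R^W$ that form a transcendence basis of $L$. The derivations $\partial_{t_1},\dots,\partial_{t_n}$ extend uniquely to $L$ and give an $L$-basis of $\Der_{\mathbb{C}}(L)$, so $D(L)=\bigoplus_{\alpha\in\mathbb{N}^n}L\,\partial_t^{\alpha}$. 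Writing $d=\sum_\alpha a_\alpha\partial_t^{\alpha}$ with $a_\alpha\in L$ and evaluating on the monomials $t^{\beta}\in\mathbb{C}[t_1,\dots,t_n]\subseteq R^W$, a straightforward induction on the multidegree $\beta$ shows that $d|_{R^W}=0$ forces every $a_\alpha=0$, so $d=0$.

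The two delicate inputs are the structural identity $D(L)=\bigoplus_\alpha L\,\partial_t^{\alpha}$, which rests on $L$ being separably generated over $\mathbb{C}$, and the fact that a full transcendence basis of $L$ can be found inside $R^W$; both hold in characteristic zero thanks to the finiteness of $R$ over $R^W$. I note that this argument uses only that $W$ is finite and does not seem to require the normality hypothesis for the characterization above; normality presumably enters Knop's more general formulation when passing to the quotient $X/W$.
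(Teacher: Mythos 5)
Your proof is correct, but there is nothing in the paper to compare it against: the paper does not prove this statement at all, it imports it verbatim from Knop (\cite{Knop}, Theorem 3.1). So your argument is a genuinely different route --- a self-contained replacement for the citation. Both inclusions are handled properly, and your key lemma (injectivity of the restriction map $D(X)\to \End_{\mathbb C}(\mathcal{O}(X)^W)$) is proved correctly: $D(X)$ embeds into $D(L)$ for $L=\Frac\,\mathcal{O}(X)$; since $\mathcal{O}(X)$ is module-finite over $\mathcal{O}(X)^W$ (Noether finiteness for finite groups), a transcendence basis $t_1,\dots,t_n$ of $L$ can be chosen inside $\mathcal{O}(X)^W$; in characteristic zero $D(L)=\bigoplus_{\alpha}L\,\partial_t^{\alpha}$; and your induction on the multidegree of the monomials $t^{\beta}$ kills all coefficients of an operator vanishing on $\mathcal{O}(X)^W$. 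As for what each approach buys: the citation invokes a much more general theorem (graded cofinite subrings, not only invariants of finite groups), and normality genuinely matters in that generality; your argument, by contrast, needs neither normality nor $\k=\mathbb C$ --- only finiteness of $W$. That last gain is not academic: in Section 6 the paper reduces from a general field of characteristic zero to $\mathbb{Q}(\chi)$ and then to $\mathbb C$ precisely so that Knop's theorem, stated over $\mathbb C$, becomes applicable; with your lemma, valid over any field of characteristic zero, that reduction could be dropped. The one hypothesis you should display explicitly is that $W$ is finite: it is implicit in the paper's context but absent from the displayed statement, and both your injectivity lemma and the theorem itself break down for infinite groups (for $\mathbb{C}^{\times}$ scaling $\mathbb{A}^{1}$, the operator $\partial$ preserves the invariant ring $\mathbb C$ yet is not invariant).
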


Denote by $\Delta'$ the result of replacing every $x_i$ in $\Delta$ by $e_i$ for all $i=1, \ldots, n$.
By the Chevalley-Shephard-Todd theorem, $\Lambda_\Delta^{W} \simeq \k[e_1,\ldots,e_n]_{\Delta'}$. Taking into account the action of operators $d_i$'s  and Theorem \ref{thm-knop} we obtain the desired invariance of  $d_i$'s under the action of $W$. Indeed,   for each  $i=1, \ldots, n$ the operator $ d_i$ sends every  $e_j$ to an element of $\k[e_1,\ldots,e_n]_{\Delta'}$ and  the same holds for $\Delta'$.   Since these elements generate $\k[e_1,\ldots,e_n]_{\Delta'}$ the statement follows.

\begin{proposition} \label{rel-Weyl}
 The map  $ \phi_{\Delta}: D(\Lambda_\Delta)^W \rightarrow D(\Lambda_\Delta^W)$ is surjective.
 \end{proposition}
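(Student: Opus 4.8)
The plan is to identify the target $D(\Lambda_\Delta^W)$ completely explicitly and then exhibit a generating set of it inside the image of $\phi_\Delta$. By the Chevalley--Shephard--Todd theorem we have $\Lambda_\Delta^W \simeq \k[e_1,\ldots,e_n]_{\Delta'}$, a localization of a polynomial ring and hence a smooth affine $\k$-algebra. Consequently $D(\Lambda_\Delta^W)$ is generated, as a $\k$-algebra, by the multiplication operators $l_g$ with $g \in \Lambda_\Delta^W$ together with the $\k$-linear derivations of $\Lambda_\Delta^W$; and since $\Der(\k[e_1,\ldots,e_n]_{\Delta'})$ is free of rank $n$ with basis $\partial_{e_1},\ldots,\partial_{e_n}$, it suffices to check that all such $l_g$ and all of $\partial_{e_1},\ldots,\partial_{e_n}$ lie in the image of $\phi_\Delta$. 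As $\phi_\Delta$ is an algebra homomorphism (restriction of domain is multiplicative on operators that preserve $\Lambda_\Delta^W$), hitting a generating set forces surjectivity.

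For the multiplication operators this is immediate: every $g \in \Lambda_\Delta^W$ is $W$-invariant, so $l_g \in D(\Lambda_\Delta)^W$, and $\phi_\Delta(l_g)$ is just multiplication by $g$ on $\Lambda_\Delta^W$. For the derivations I would use the operators $d_i = \sum_{k=1}^n f_{ik}\partial_k$ constructed above. Each $d_i$ is a first-order operator with vanishing zeroth-order term, hence a derivation of $\Lambda_\Delta$, and we have already verified both that $d_i \in D(\Lambda_\Delta)^W$ and that $d_i(e_j)=\delta_{ij}$. The restriction $\phi_\Delta(d_i)$ is therefore a derivation of $\Lambda_\Delta^W = \k[e_1,\ldots,e_n]_{\Delta'}$ sending $e_j \mapsto \delta_{ij}$; since a derivation is determined by its values on a generating set, this gives $\phi_\Delta(d_i)=\partial_{e_i}$. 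Combining the two points, the image of $\phi_\Delta$ contains a full generating set of $D(\Lambda_\Delta^W)$, so $\phi_\Delta$ is surjective, and together with Proposition \ref{dagger} this makes $\phi_\Delta$ an isomorphism.

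I expect the genuine obstacle to have been cleared already, before the statement: the substantive work is the construction of the operators $d_i$ with $d_i(e_j)=\delta_{ij}$ and, above all, the proof that they are $W$-invariant, which passes through the field-of-definition reduction and Knop's theorem (Theorem \ref{thm-knop}). Within the surjectivity argument itself, the one delicate passage is from ``$\phi_\Delta(d_i)$ agrees with $\partial_{e_i}$ on the $e_j$'' to ``$\phi_\Delta(d_i)=\partial_{e_i}$ on all of $\Lambda_\Delta^W$'': this needs $\phi_\Delta(d_i)$ to genuinely preserve $\Lambda_\Delta^W$ (so that it is a bona fide element of $D(\Lambda_\Delta^W)$), which is exactly the established $W$-invariance, and it needs the $e_j$ to generate $\Lambda_\Delta^W$ after localization at $\Delta'$, which is Chevalley--Shephard--Todd together with $d_i(\Delta')\in\Lambda_\Delta^W$. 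The smoothness of $\Spec \Lambda_\Delta^W$, used to reduce $D(\Lambda_\Delta^W)$ to functions and derivations, is automatic since it is a principal open subset of affine space. Once these are in hand the surjectivity is a formal consequence.
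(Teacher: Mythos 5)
Your proof is correct and follows essentially the same route as the paper's: both reduce to the fact that, by Chevalley--Shephard--Todd, $\Lambda_\Delta^W$ is a smooth (localized polynomial) algebra whose ring of differential operators is generated by multiplication operators and the free module of derivations with basis $\partial_{e_1},\ldots,\partial_{e_n}$, and both then identify $\phi_\Delta(d_i)=\partial_{e_i}$. The only cosmetic difference is that the paper justifies the freeness of $\Der_\k(\Lambda_\Delta^W)$ via K\"ahler differentials (citing McConnell--Robson 15.1.12), whereas you assert it directly; the substance is identical.
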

 
 \begin{proof}
It is sufficient to show that the images of $d_i$, $e_i$, $i=1, \ldots, n$ under $ \phi_{\Delta}$ are  the Weyl generators of $D(\Lambda_\Delta^{W})$. Let $A:=\Lambda_\Delta^{W} $.
 The $A$-module of K\"{a}lher differentials $\Omega_{\k}(A)$ is freely generated over $A$ with basis $d_{e_1}, \ldots, d_{e_n}$. Then, by \cite{McConnell}, 15.1.12, the  $A$-module of derivations $\Der_{\k}(A)$  is freely generated by the unique extensions of $\partial_{e_i}$, $i=1, \ldots, n$ from $\k[e_1, \ldots, e_n]$ to $A$. Clearly,
  $\phi_{\Delta}(d_i)=\partial_{e_i}$, $i=1, \ldots, n$.
\end{proof}

Combining Proposition \ref{dagger} and Proposition \ref{rel-Weyl} we conclude
$$ D(\Lambda_\Delta)^W \simeq D(\Lambda_\Delta^W).$$

Applying Proposition \ref{elem} we finally have

\begin{corollary}\label{NNP-irred}
Let $\k$ be an arbitrary field of zero characteristic and $W$ an irreducible pseudo-reflection group. Then the
NNP holds for $W$.
\end{corollary}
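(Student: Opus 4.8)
The plan is to deduce the corollary from the ring isomorphism $D(\Lambda_\Delta)^W \simeq D(\Lambda_\Delta^W)$ established just above, together with the three localization identities of Proposition \ref{elem}, by passing to skew fields of fractions. Recall that $D(\Lambda) = A_n(\k)$, and that by the functoriality of the skew-field-of-fractions construction recorded in the Preliminaries one has $F_n(\k)^W = (\Frac A_n(\k))^W \cong \Frac(A_n(\k)^W) = \Frac(D(\Lambda)^W)$. Thus the NNP for $W$ amounts precisely to proving the isomorphism $\Frac(D(\Lambda)^W) \cong F_n(\k)$, and the whole argument reduces to a computation of this skew field.

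First I would rewrite the invariant ring after localizing at the $W$-invariant element $\Delta$. Since $\Delta \neq 0$ is a nonzero divisor, localization does not alter the skew field of fractions, so $\Frac(D(\Lambda)^W) = \Frac\big((D(\Lambda)^W)_\Delta\big)$. Now Proposition \ref{elem}(3) gives $(D(\Lambda)^W)_\Delta \cong (D(\Lambda)_\Delta)^W$, and Proposition \ref{elem}(2) gives $D(\Lambda)_\Delta = D(\Lambda_\Delta)$, whence $\Frac(D(\Lambda)^W) \cong \Frac\big(D(\Lambda_\Delta)^W\big)$. Applying the isomorphism $D(\Lambda_\Delta)^W \simeq D(\Lambda_\Delta^W)$ I then obtain $\Frac(D(\Lambda)^W) \cong \Frac\big(D(\Lambda_\Delta^W)\big)$.

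Next I would identify the right-hand side with a localization of the Weyl algebra. By Proposition \ref{elem}(1), $\Lambda_\Delta^W = (\Lambda^W)_\Delta$, and by the Chevalley--Shephard--Todd theorem $\Lambda^W = \k[e_1,\ldots,e_n]$ is a polynomial algebra in $n$ variables, so $\Lambda_\Delta^W = \k[e_1,\ldots,e_n]_{\Delta'}$ with $\Delta'$ as defined above. Applying Proposition \ref{elem}(2) now to this polynomial ring gives $D(\Lambda_\Delta^W) = D(\k[e_1,\ldots,e_n])_{\Delta'} = A_n(\k)_{\Delta'}$, a localization of the Weyl algebra whose skew field of fractions is again $F_n(\k)$. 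Combining the two chains yields $\Frac(D(\Lambda)^W) \cong F_n(\k)$, which is exactly the positive solution of the NNP for $W$.

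I do not expect a genuine obstacle at this final stage: all the substance has already been absorbed into Propositions \ref{dagger} and \ref{rel-Weyl}, whose combination produced the crucial ring isomorphism at the localized level, so the corollary is a matter of bookkeeping with the localization identities of Proposition \ref{elem} and with the invariance of the skew field of fractions under localization. The only points demanding a little care are that $\Delta$ (and hence $\Delta'$) be a nonzero divisor, so that passing to the localization leaves the fraction field unchanged, and that the identification $\Frac(A_n(\k)^W) \cong F_n(\k)^W$ be invoked correctly from the functorial formalism of the Preliminaries.
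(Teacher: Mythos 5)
Your proof is correct and follows essentially the same route as the paper, which simply states ``Applying Proposition \ref{elem} we finally have'' the corollary after the isomorphism $D(\Lambda_\Delta)^W \simeq D(\Lambda_\Delta^W)$; you have merely written out the bookkeeping (localization identities, Chevalley--Shephard--Todd, invariance of $\Frac$ under localization, and $F_n(\k)^W \cong \Frac(A_n(\k)^W)$) that the paper leaves implicit.
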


\subsection{NNP for general pseudo-reflection groups}
In this subsection we consider general pseudo-reflection groups.

Let  $V$ be a finite dimensional vector space. If  $g$ is
 a linear automorphism  of $V$ then we set $Fix \, g = \{ v \in V | gv= v \} = \Ker \, (Id - g)$, and $[V,g]= Im(Id - g)$.

If $g$ is a pseudo-reflection, $g\neq id$ then $Fix \, g$ is a hyperplane and $[V, g]$ is one dimensional. If $a \in V$ generates $[V, g]$ then for every $v \in V$ there exists $\psi (v) \in \k$ such that $v - gv = \psi (v) a$.  Then $\psi$ is a linear functional on $V$ and $\Ker \, \psi = Fix \, g$.

The following is standard

\begin{lemma} \label{boring-4.1}
If $g,h \in GL(V)$ then $Fix(ghg^{-1}) = g Fix \, h$. If $h$ is a pseudo-reflection with the fixed hyperplane $H$, then $g h g^{-1}$ is also a pseudo-reflection with the fixed hyperplane $gH$.
\end{lemma}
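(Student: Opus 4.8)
The plan is to reduce both assertions to the single elementary identity
$$Id - ghg^{-1} = g(Id - h)g^{-1},$$
which expresses that conjugation by $g$ intertwines $Id - h$ with $Id - ghg^{-1}$. Everything then follows by transporting kernels and images under the invertible map $g$.

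First I would establish the equality $Fix(ghg^{-1}) = g\,Fix\,h$ directly, by a chain of equivalences: for $v \in V$,
$$ghg^{-1}v = v \iff hg^{-1}v = g^{-1}v \iff g^{-1}v \in Fix\,h \iff v \in g\,Fix\,h.$$
Equivalently, this is the statement $\Ker(Id - ghg^{-1}) = g\,\Ker(Id - h)$, read off from the displayed identity together with the invertibility of $g$. This proves the first claim with no hypothesis on $g$ or $h$ beyond their lying in $GL(V)$.

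For the second part, suppose $h$ is a pseudo-reflection with fixed hyperplane $H = Fix\,h$. The fixed-space statement is exactly the first part: $Fix(ghg^{-1}) = gH$, and since $g$ is invertible, $gH$ has the same dimension as $H$ and is therefore again a hyperplane. To see that $ghg^{-1}$ is itself a pseudo-reflection, I would use the characterization recalled just before the lemma, namely that a non-identity $g$ is a pseudo-reflection precisely when $[V,g] = \im(Id - g)$ is one-dimensional. From the displayed identity,
$$[V, ghg^{-1}] = \im\big(g(Id - h)g^{-1}\big) = g\,\im(Id - h) = g\,[V,h],$$
which is one-dimensional because $[V,h]$ is. Hence $ghg^{-1}$ has one-dimensional $[V,\,\cdot\,]$ and codimension-one fixed space $gH$, i.e. it is a pseudo-reflection with fixed hyperplane $gH$.

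The main obstacle here is essentially that there is none: the lemma is purely formal (as its label suggests), and the only point needing a moment's attention is to confirm that the working definition of pseudo-reflection is transported correctly under conjugation. Since both the order of an element and the dimension of $[V,\,\cdot\,]$ (equivalently of the fixed space) are manifestly conjugation-invariant, either formulation yields the conclusion immediately.
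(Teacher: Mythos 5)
Your proof is correct. The paper actually states this lemma without proof, labelling it ``standard,'' so there is nothing to compare against; your argument --- the conjugation identity $Id - ghg^{-1} = g(Id-h)g^{-1}$, from which $\Ker$ and $\im$ transport under the invertible map $g$ --- is precisely the standard argument the authors had in mind, and it correctly handles both the fixed-hyperplane claim and the conjugation-invariance of the pseudo-reflection property (finite order and one-dimensionality of $[V,\cdot\,]$).
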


In the following we collect basic properties of pseudo-reflections.

\begin{lemma}\label{lem-boring}    
\begin{itemize}
\item[(1)]
\label{boring-4.2}
Let $g$ be a pseudo-reflection of order $m>1$, $H = Fix \, g$,  $L_H$ any linear functional such that $H = \Ker \, L_H$. Let $a$ be a generator of $[V, g]$. Then there exists an $m$-th primitive root of unity $\mu$ such that $gv = v - (1 - \mu)\frac{L_H(v)}{L_H(a)}a$, for all $v \in V$.
\item[(2)] \label{boring-4.3}
Let $r, s \neq id$ be pseudo-reflections, $H = Fix \, r$, $J = Fix \, s$, $x$ a generator of $[V, r]$, and $y$ a generator of $[V, s]$. If $x \in J$ and $y \in H$ then $rs=sr$.
\item[(3)] \label{boring-4.4}
A subspace  $V' \subset V$ is invariant by a pseudo-reflection $g \neq id$ if and only if $V' \subseteq Fix \, g$ or $[V, g] \subseteq V'$.
\end{itemize}
\end{lemma}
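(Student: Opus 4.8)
The three statements are standard linear-algebra facts about a finite-order automorphism over a field of characteristic zero, and the plan is to establish them in the order (1) then (2), with (3) handled directly. For part (1) the essential point is that $g$ is \emph{semisimple}: having finite order in characteristic zero, its minimal polynomial divides $X^m-1$, which has distinct roots, so $g$ is diagonalizable. The fixed hyperplane $H = Fix\, g$ is precisely the eigenspace for the eigenvalue $1$, which has dimension $n-1$; hence there remains a single eigenvalue $\mu \neq 1$ on a complementary eigenline. I would argue that the generator $a$ of $[V,g]=\im(\id-g)$ spans this eigenline: if $a_0$ is a $\mu$-eigenvector then $a_0 - g a_0 = (1-\mu)a_0$ lies in $[V,g]$ and is nonzero, so $a_0$ is proportional to $a$ and $ga = \mu a$. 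Since $g$ has exact order $m$, the scalar $\mu$ is a primitive $m$-th root of unity. Writing $v = h + ca$ with $h \in H$, the functional $L_H$ kills $h$, so $c = L_H(v)/L_H(a)$; applying $g$ gives $gv = h + c\mu a = v - c(1-\mu)a$, which is exactly the claimed formula.

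Part (2) then follows by substitution. Let $\alpha$ and $\beta$ be linear functionals cutting out $H$ and $J$ respectively, so by (1) we may write $rv = v - c_r\,\alpha(v)\,x$ and $sv = v - c_s\,\beta(v)\,y$ for suitable nonzero scalars $c_r,c_s$. The hypotheses $x \in J$ and $y \in H$ say exactly that $\beta(x)=0$ and $\alpha(y)=0$. Computing both composites, the only terms distinguishing $rs$ from $sr$ are the mixed terms proportional to $\alpha(v)\beta(x)$ and to $\beta(v)\alpha(y)$; both vanish, so $rs(v) = v - c_r\alpha(v)x - c_s\beta(v)y = sr(v)$ for every $v$, giving $rs=sr$.

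For part (3) the reverse implication is immediate: if $V' \subseteq Fix\, g$ then $g$ fixes $V'$ pointwise, while if $[V,g]\subseteq V'$ then for $v\in V'$ one has $gv = v-(v-gv)$ with $v-gv \in [V,g]\subseteq V'$, so $gv \in V'$. For the forward implication, suppose $V'$ is $g$-invariant but $V'\not\subseteq Fix\, g$, and choose $v\in V'$ with $gv\neq v$; then $v-gv$ is a nonzero element of the one-dimensional space $[V,g]$ lying in $V'$ (as both $v$ and $gv$ do), so it generates $[V,g]$ and hence $[V,g]\subseteq V'$. I expect no serious obstacle in any of the three parts; the only step requiring genuine care is the appeal to semisimplicity in (1), which is precisely where the characteristic-zero hypothesis is used, and once (1) is in place parts (2) and (3) are short direct computations.
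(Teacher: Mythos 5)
Your proof is correct, and parts (2) and (3) coincide with the paper's own argument: the same cancellation of the mixed terms in the composites $rs$ and $sr$ (each killed by $\alpha(y)=0$, respectively $\beta(x)=0$), and the same dichotomy argument for (3) using that $[V,g]$ is one-dimensional. In part (1) you take a somewhat different route. The paper starts from the linear functional $\psi$ defined by $v - gv = \psi(v)a$, which exists purely because $[V,g]$ is one-dimensional; it then notes $\Ker\,\psi = H = \Ker\, L_H$, so $\psi = \lambda L_H$ for a scalar $\lambda$, and determines $\lambda$ by evaluating at $a$, the eigenvalue relation $ga = \mu a$ dropping out by setting $v = a$. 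You instead invoke semisimplicity of $g$ (finite order in characteristic zero), split $V = H \oplus \k a$ into eigenspaces, identify $[V,g]$ with the $\mu$-eigenline, and compute directly. The two routes are close in length, but yours has a genuine merit: it makes explicit where characteristic zero is used. The paper's computation implicitly assumes $L_H(a) \neq 0$ (i.e.\ $a \notin Fix\,g$) and that the eigenvalue $\mu = 1 - \psi(a)$ is a primitive $m$-th root of unity; both assertions fail for unipotent maps and are exactly what the diagonalizability you prove rules out. Conversely, the paper's argument is slightly more economical because it reuses the functional $\psi$ already introduced in the setup paragraph preceding the lemma.
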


\begin{proof} Given a pseudo-reflection $g$ 
consider a linear functional $\psi$ such that $gv = v - \psi(v) a $ and $H = \Ker \, \psi$, as above. Hence $ga= \mu a$ for a primitive $m$-th root of unity $\mu$, and hence $\psi(a) = 1 - \mu$. We have $\psi = \lambda L_H $, where  $0 \neq \lambda \in \k$. This gives  $\lambda = (1 - \mu)/L_H(a)$ and implies statement (1).
Applying (1),c we have $\mu, \nu \in \k$ such that $\forall \, v \in V$
\[ rs(v) = v - (1 - \mu) \frac{L_H(v)}{L_H(a)}a - (1 - \nu) \frac{L_J(v)}{L_J(b)} b + (1 - \mu)(1 - \nu) \frac{L_H(b) L_J(v)}{(L_H(a) L_J(b))} a .\]
If $y \in H$ then $L_H(y)=0$ and the last term is $0$. Analogously, the last term in the expression of $sr(v)$ is 0 and other terms in both expressions
are equal. Therefore $rs=sr$.

Finally, if $V' \subseteq Fix \, g$ or $[V, g] \subseteq V'$, then  $V'$ is invariant by  the statement (1). Conversely,  if $V'$ is $g$-invariant and is not contained in $Fix \, g$, then $[V', g] \neq 0$, and hence $[V, g]=[V', g] \subseteq V'$.   
\end{proof}

The following is probably well known but we include the proof for the sake of completeness.

\begin{proposition} \label{coxeter-like}
Let $W$ be a finite group of pseudo-reflections on $V$. Consider a decomposition $V_1 \oplus \ldots \oplus V_m$  of the $\k W$-module $V$ into irreducible submodules and set $W_i$ to be the restriction of $W$ to $V_i$, $i=1, \ldots, m$. Then $W_i$ is either a pseudo-reflection group or trivial, and $W \simeq W_1 \times \ldots \times W_m$.
\end{proposition}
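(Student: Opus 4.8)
The plan is to reduce everything to a single observation about where each pseudo-reflection is ``supported'' in the decomposition $V = V_1 \oplus \cdots \oplus V_m$. First I would fix a pseudo-reflection $g \in W$, $g \neq id$. Each $V_i$ is a $\k W$-submodule, hence $g$-invariant, so part (3) of Lemma \ref{lem-boring} applies: for every $i$ either $V_i \subseteq Fix\, g$ or $[V,g] \subseteq V_i$. Since $[V,g]$ is one-dimensional it can lie in at most one summand (distinct $V_i$ meet only in $0$), and it must lie in at least one, for otherwise $V = \bigoplus_i V_i \subseteq Fix\, g$ would force $g = id$. Hence there is a unique index $j = j(g)$ with $[V,g] \subseteq V_{j}$, and $g$ acts as the identity on $V_i$ for all $i \neq j$.

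Next I would check that $g|_{V_{j}}$ is a pseudo-reflection of $V_{j}$. Since $g$ fixes $\bigoplus_{i \neq j} V_i$ pointwise, this subspace lies in the hyperplane $Fix\, g$, so $Fix\, g = \bigl(\bigoplus_{i \neq j} V_i\bigr) \oplus (Fix\, g \cap V_{j})$, and a dimension count gives $\dim(Fix\, g \cap V_{j}) = \dim V_{j} - 1$. Thus $g|_{V_{j}}$ is a nontrivial finite-order automorphism of $V_{j}$ fixing a hyperplane, i.e. a pseudo-reflection. Because $W$ is generated by pseudo-reflections and restriction is a homomorphism, $W_i$ is generated by the elements $g|_{V_i}$ with $g$ ranging over the pseudo-reflections of $W$; those with $j(g) \neq i$ restrict to the identity, while those with $j(g) = i$ restrict to pseudo-reflections of $V_i$. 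Therefore each $W_i$ is generated by pseudo-reflections of $V_i$, so it is a pseudo-reflection group, or trivial if no pseudo-reflection of $W$ is supported on $V_i$.

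Finally, to obtain the product decomposition I would consider the restriction homomorphism $\Phi \colon W \to W_1 \times \cdots \times W_m$, $w \mapsto (w|_{V_1}, \ldots, w|_{V_m})$. It is injective, since $w|_{V_i} = id$ for all $i$ forces $w = id$ on $V = \bigoplus_i V_i$. For surjectivity, the first step shows that each pseudo-reflection satisfies $\Phi(g) = (id, \ldots, g|_{V_{j(g)}}, \ldots, id)$, an element supported in the $j(g)$-th factor; the images of those $g$ with $j(g) = i$ generate $\{1\} \times \cdots \times W_i \times \cdots \times \{1\}$, and as $i$ varies these subgroups generate the whole product. Hence $\Phi$ is onto, and $W \simeq W_1 \times \cdots \times W_m$. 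The main point to get right is the first step: one must combine the one-dimensionality of $[V,g]$ with the dichotomy of part (3) of Lemma \ref{lem-boring} to pin each reflection to a single summand, after which both assertions follow formally from the generation argument.
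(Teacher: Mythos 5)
Your proof is correct, and it differs from the paper's in how the product decomposition is established. Both arguments share the same crucial first step: using part (3) of Lemma \ref{lem-boring} together with the one-dimensionality of $[V,g]$ to attach each non-identity pseudo-reflection $g$ to a unique summand $V_{j(g)}$, on the complement of which $g$ acts trivially. From there the routes diverge. The paper works with \emph{internal} subgroups: it defines $W_i \leq W$ as the subgroup generated by the pseudo-reflections supported on $V_i$, invokes part (2) of Lemma \ref{lem-boring} to see that $W_i$ and $W_j$ commute elementwise, and then asserts that $W$ is the internal direct product of the $W_i$'s. You instead work with the restriction homomorphism $\Phi \colon W \to W_1 \times \cdots \times W_m$ and prove injectivity and surjectivity directly, which has two advantages: it matches the statement's definition of $W_i$ as a restriction (the paper's subgroup-generated-by-reflections definition must implicitly be identified with the restriction image), and it makes part (2) of Lemma \ref{lem-boring} unnecessary, since elements supported on distinct summands commute for trivial linear-algebra reasons. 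Your surjectivity argument also makes explicit the point the paper glosses over with ``Therefore, $W$ is the direct product'': one needs that each factor $\{1\} \times \cdots \times W_i \times \cdots \times \{1\}$ is actually hit by $\Phi$, which you verify by lifting words in the restricted generators. The paper's version, in exchange, records the extra observation that each nontrivial $W_i$ acts irreducibly on $V_i$, which your write-up does not address (though the proposition as stated does not require it).
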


\begin{proof}
By Lemma \ref{lem-boring}, (3), if $g$ is a non-identity pseudo-reflection  then $[V, g] \subseteq V_i$ for some $i$. Let $W_i$ be the subgroup of $W$ generated by the pseudo-reflections $g$ such that $[V, g] \subset V_i$ (if there is no such pseudo-reflections then $W_i=Id$). The subgroup $W_i$ acts trivially on all $V_j$, $j \neq i$, and by By Lemma \ref{lem-boring}, (2),
$W_i$ and $W_j$ commute. Therefore, $W$ is the direct product of the subgroups $W_i$'s, and each $W_i$ is irreducible pseudo-reflection group on $V_i$, or trivial.
\end{proof}

Consider now the Weyl algebra $A_n(\k)$ with a linear action of   a  pseudo-reflection group $W$ extended from a linear action on $n$-dimensional vector space $V$. 
By Proposition \ref{coxeter-like} we have $W \simeq W_1 \times \ldots \times W_m$. 
Suppose that $n=n_1+ \ldots + n_m + k$ and $A_n(\k)=A_{n_1}(\k) \otimes  \ldots \otimes A_{n_m}(\k) \otimes A_k(\k)$. Then for each $i=1, \ldots, m$, $W_i$ acts on $A_{n_i}(\k)$ and fixes all 
$A_{n_j}(\k)$ with $j\neq i$. The whole group $W$ fixes $A_k(\k)$.  Then we have 

$$A_n(\k)^W \simeq A_{n_1}(\k)^{W_1} \otimes \ldots \otimes  A_{n_m}(\k)^{W_m} \otimes A_k (\k).$$

Applying Corollaey \ref{NNP-irred} we immediately obtain

\begin{theorem} \label{NNP-pseudo-reflections}
The NNP holds for all pseudo-reflection groups over fields of zero characteristic.
\end{theorem}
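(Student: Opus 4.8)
The plan is to prove the theorem by combining the tensor decomposition
\[ A_n(\k)^W \simeq A_{n_1}(\k)^{W_1} \otimes \cdots \otimes A_{n_m}(\k)^{W_m} \otimes A_k(\k) \]
recorded just above (a consequence of Proposition \ref{coxeter-like}) with the irreducible case of Corollary \ref{NNP-irred}, reducing the whole assertion to the statement that $A_n(\k)^W$ and $A_n(\k)$ share a common Ore localization. Since ``NNP holds for $W$'' means exactly $\Frac(A_n(\k)^W)\cong F_n(\k)$, it suffices to produce this isomorphism, and because the skew field of fractions is unchanged under Ore localization, realizing $A_n(\k)^W$ up to localization as a localization of $A_n(\k)$ finishes the argument.

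First I would extract from the proof of the irreducible case its concrete output rather than just the abstract birational statement. For each nontrivial irreducible block $W_i$, acting on $\Lambda_i:=\k[x^{(i)}_1,\ldots,x^{(i)}_{n_i}]$, there is a $W_i$-invariant $\Delta_i$ and an isomorphism $D((\Lambda_i)_{\Delta_i})^{W_i}\simeq D((\Lambda_i)^{W_i}_{\Delta_i})$; by Chevalley--Shephard--Todd the invariants $(\Lambda_i)^{W_i}$ form a polynomial ring, so the right-hand side is a localized Weyl algebra carrying the Weyl generators $e^{(i)}_j,\partial_{e^{(i)}_j}$ supplied by Proposition \ref{rel-Weyl}. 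Through Proposition \ref{elem} this exhibits an Ore localization $(A_{n_i}(\k)^{W_i})_{S_i}$ of the invariant algebra, with $S_i$ generated by the invariant polynomial $\Delta_i$, that is isomorphic to a localized Weyl algebra and hence has skew field of fractions $F_{n_i}(\k)$.

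Next I would localize the tensor decomposition one factor at a time. Because the sets $S_i$ lie in distinct, mutually commuting tensor factors and consist of invariant polynomials in the commuting variables $x^{(i)}$, they are ad-locally-nilpotent on $A_n(\k)$ and together generate a single Ore set $S$ in $A_n(\k)^W$ (same mechanism as the Ore-set argument preceding Lemma \ref{lemma-MAIN}), giving
\[ (A_n(\k)^W)_S \simeq (A_{n_1}(\k)^{W_1})_{S_1}\otimes\cdots\otimes (A_{n_m}(\k)^{W_m})_{S_m}\otimes A_k(\k). \]
By the previous step each localized invariant factor is a localized Weyl algebra, and a tensor product of localized Weyl algebras together with the untouched factor $A_k(\k)$ is itself an Ore localization of $A_{n_1}(\k)\otimes\cdots\otimes A_{n_m}(\k)\otimes A_k(\k)=A_n(\k)$. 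Thus $(A_n(\k)^W)_S$ is a localization of $A_n(\k)$, and invariance of the skew field of fractions under localization yields
\[ \Frac(A_n(\k)^W)=\Frac\big((A_n(\k)^W)_S\big)\cong \Frac(A_n(\k))=F_n(\k). \]

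The main obstacle will be the bookkeeping in this localization step: one must verify that the blockwise multiplicative sets $S_i$ genuinely assemble into a two-sided Ore set in the noncommutative algebra $A_n(\k)^W$, and that the tensor product of the localized Weyl algebras is \emph{literally} a localization of $A_n(\k)$ rather than merely birationally equivalent to it, so that the fraction fields coincide on the nose. Both points rest on the fact that the $S_i$ are generated by $W_i$-invariant polynomials in disjoint collections of the commuting variables $x^{(i)}$, hence ad-nilpotent and central modulo lower order, exactly as in the Ore constructions already employed in the paper; once this is in place the appeal to Corollary \ref{NNP-irred} is the only substantive input and the conclusion is immediate.
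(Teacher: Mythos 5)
Your proposal follows the paper's own route: the paper likewise combines the decomposition $W \simeq W_1 \times \cdots \times W_m$ from Proposition \ref{coxeter-like} with the tensor factorization $A_n(\k)^W \simeq A_{n_1}(\k)^{W_1} \otimes \cdots \otimes A_{n_m}(\k)^{W_m} \otimes A_k(\k)$ and then invokes the irreducible case (Corollary \ref{NNP-irred}), which is itself obtained from the explicit isomorphism $D(\Lambda_\Delta)^{W_i} \simeq D(\Lambda_\Delta^{W_i})$ of Propositions \ref{dagger} and \ref{rel-Weyl} together with Proposition \ref{elem}. The only difference is that the paper treats the passage from the blockwise statement to $\Frac(A_n(\k)^W)\cong F_n(\k)$ as immediate, whereas you spell out the Ore-set and localization bookkeeping (that the $\Delta_i$ generate an Ore set in $A_n(\k)^W$ and that the localized invariant algebra is literally a localization of $A_n(\k)$ in the Chevalley generators) --- a sound and worthwhile elaboration of the same argument, not a different one.
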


\begin{remark}


We would like to point that
the solution of the NNP for irreducible pseudo-reflection groups in  Proposition \ref{rel-Weyl} uses an explicit computation  of Weyl generators which realized the isomorphism $F_n(\k)^W \cong F_n(\k)$. Since an algebraic basis of the invariants of the irreducible unitary reflection groups is well known (\cite{Kane}); we can compute these Weyl generators, refining the results in \cite{Eshmatov}.
We consider an example below. 
\end{remark}

\begin{example}
Assume $n=3$ and $W=S_n$.
Set $J=(x_1-x_2)(x_2-x_3)(x_3-x_2)$. The following elements are  the Weyl generators of $F_3(\k)^{S_3}$, where $S_3$ acts by permutations:

\[ x_1 + x_2 + x_3 \rightarrow X_1, \, x_1 x_2 + x_2 x_3 + x_1 x_3 \rightarrow X_2, \, x_1 x_2 x_3 \rightarrow X_3; \]

\[ \frac{x_1^2(x_2-x_3)}{J} \partial_1 + \frac{x_2^2(x_3-x_1)}{J} \partial_2 + \frac{x_3^2(x_1-x_2)}{J} \partial_3 \rightarrow Y_1; \]
\[ \frac{x_1(x_3-x_2)}{J} \partial_1 + \frac{x_2(x_1 - x_3)}{J} \partial_2 + \frac{x_3(x_2-x_1)}{J} \partial_3 \rightarrow Y_2;\]
\[ \frac{(x_2-x_3)}{J} \partial_1 + \frac{(x_3-x_1)}{J} \partial_2 + \frac{ (x_1-x_2)}{J} \partial_3 \rightarrow Y_3. \]

Here, $Y_i X_j-X_j Y_i= \delta_{ij}$ for $i, j=1, 2, 3$.
\end{example}

\section{Invariant cross products}

In this section we apply the result above to the subalgebras of invariants of cross products.

Let  $G$ be afinite group of automorphisms of field $L$, 
$\mathcal M$  a monoid  of automorphisms of  $L$ on which $G$
acts by conjugations.  Denote by $L*\mathcal M$ the cross product, where $(l m)(l' m')=(lm(l'))(m m')$ for $l, l'\in L$ and $m, m'\in \mathcal M$.
 We have a well defined action of $G$ on the cross product $L*\mathcal M$: $g(l m)=g(l)g(m)$, $g \in G$, $l \in L$, $m \in \mathcal{M}$. 
 Consider the ring of invariants $(L*\mathcal{M})^G$ by the action of $G$.

Suppose $L*\mathcal M$ is an Ore domain. Then $(L*\mathcal M)^G$ is an Ore
domain and the skew field of fractions $\mathcal F ((L*\mathcal M)^G)$ is isomorphic to $(\mathcal F (L*\mathcal M))^G$ with induced actions of $G$ on the skew field of fractions.

Assume $L\simeq \k (t_{1},\dots, t_{n})$ to be the field of fractions of the symmetric algebra $S(V)$ for some $n$-dimensional $\k$-vector space $V$. If $G<GL_n$ is a finite group then it acts linearly on $L$. If  $G$ normalizes  $\mathcal M$ then $(L*\mathcal M)^G$ is a  \emph{linear Galois order} \cite{Eshmatov}.  Suppose $L=\k (t_1, \ldots, t_n;  z_1, \ldots z_m)$, for some integers $n$, $m$, $\mathcal M\simeq \mathbb Z^{n}$ 
 is generated by $\varepsilon_1, \ldots \varepsilon_1$, where
 $\varepsilon_i(t_j)=t_j + \delta_{ij}$, $\varepsilon_i(z_k)=z_k$  $i,j=1, \ldots, n$, $k=1, \ldots, m$ (in this case we ay that $\mathcal M$ acts bu \emph{shifts} on $L$).
 It was shown in \cite{Eshmatov}, Theorem 6, that for such cross products and for any complex reflection group $G$,  $(L*\mathcal M)^G$  is birationally equivalent to $A_n(\mathbb C)\otimes \mathbb C [z_1, \ldots, z_m]$.  Applyng Theorem \ref{thm-main} we can extend this result to other groups.

\begin{theorem}
\label{thm-cross}
Let $\k$ has characteristic zero, 
 $L=\k (t_1, \ldots, t_n;  z_1, \ldots z_m)$, for some integers $n$, $m$, and    $\mathcal M\simeq \mathbb Z^{n}$ acting by shifts on $L$. Then
\begin{itemize}
\item[{(1)}] $(L*\mathcal M)^G$
 is birationally equivalent to $A_n(\k)\otimes \k [z_1, \ldots, z_m]$ for any pseudo-reflection group $G$;
 \item[{(2)}] If  $L^G\simeq L$ for a given group $G$ then   $(L*\mathcal M)^G$     is birationally equivalent to $A_n(\k)\otimes \k [z_1, \ldots, z_m]$.
 \end{itemize}
\end{theorem}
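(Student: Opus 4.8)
The plan is to treat $\k(z_1,\dots,z_m)$ as a new ground field and to recognize $L*\mathcal M$ as a localized Weyl algebra over it, so that the whole statement collapses onto Theorem \ref{thm-main}. Write $\k(z):=\k(z_1,\dots,z_m)$, a field of characteristic zero, so that $L=\k(z)(t_1,\dots,t_n)$. Since $\mathcal M\simeq\mathbb Z^n$ acts by shifts on the $t_i$ and fixes the $z_k$, the cross product $L*\mathcal M$ is nothing but $\k(z)(t_1,\dots,t_n)*\mathbb Z^n$. Applying the embedding of the Preliminaries over the base field $\k(z)$ (with $\varepsilon_i$ playing the role of $\sigma_i^{-1}$), I would identify $L*\mathcal M$ with the localization $A_n(\k(z))[S^{-1}]$, $S=\k(z)[t_1,\dots,t_n]\setminus\{0\}$; in particular $\mathcal F(L*\mathcal M)\cong F_n(\k(z))$. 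The linear action of $G$ on the $t_i$, obtained from the action on $V$ by extension of scalars while fixing the $z_k$, then corresponds under this identification to the standard linear action of $G$ on $A_n(\k(z))$.

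Next I would pass to invariants of fraction fields. Using $(\mathcal F(R))^G\cong\mathcal F(R^G)$ from the Preliminaries, $\mathcal F((L*\mathcal M)^G)\cong(\mathcal F(L*\mathcal M))^G\cong F_n(\k(z))^G\cong\mathcal F\bigl(A_n(\k(z))^G\bigr)$. Now I would invoke Theorem \ref{thm-main} over the ground field $\k(z)$, which requires the CNP for $G$ over $\k(z)$. For (1), $G$ is a pseudo-reflection group, so by Chevalley--Shephard--Todd the CNP holds over every field of characteristic zero, in particular over $\k(z)$. For (2), the hypothesis $L^G\simeq L$ says exactly that $\k(z)(t_1,\dots,t_n)^G$ is $\k(z)$-rational, i.e.\ the CNP holds for $G$ over $\k(z)$ (invariants commute with the purely transcendental extension by the $z_k$, since $G$ fixes them). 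In either case Theorem \ref{thm-main} gives $\mathcal F(A_n(\k(z))^G)\cong F_n(\k(z))$.

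Finally I would unwind the base change: $F_n(\k(z))=\mathcal F(A_n(\k(z)))=\mathcal F(A_n(\k)\otimes_\k\k(z))=\mathcal F(A_n(\k)\otimes_\k\k[z_1,\dots,z_m])$, the last equality because localizing $A_n(\k)\otimes\k[z_1,\dots,z_m]$ at the central set $\k[z_1,\dots,z_m]\setminus\{0\}$ leaves the skew field of fractions unchanged. Chaining the isomorphisms yields $\mathcal F((L*\mathcal M)^G)\cong\mathcal F(A_n(\k)\otimes\k[z_1,\dots,z_m])$, which is precisely the asserted birational equivalence, simultaneously for both items.

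The routine content is the bookkeeping of localizations and base extensions; the steps that deserve real care are verifying that the identification $L*\mathcal M\cong A_n(\k(z))[S^{-1}]$ is genuinely $G$-equivariant (matching the shift convention $\varepsilon_i=\sigma_i^{-1}$ and the induced linear action), and checking that the CNP persists after enlarging the ground field to $\k(z)$. The latter is automatic for pseudo-reflection groups, and for part (2) it amounts to the observation that the fixed variables $z_k$ can be absorbed into the base field, so that $L^G\simeq L$ is literally the CNP for $G$ over $\k(z)$. Once these are settled, the conclusion is entirely formal from Theorem \ref{thm-main}.
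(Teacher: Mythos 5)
Your proposal follows the same route as the paper's own (very terse) proof: identify $\mathcal F(L*\mathcal M)$ with $F_n(\k(z))$ via the shift embedding of the Preliminaries, pass $G$-invariants through $\mathcal F$, and then invoke Theorem \ref{NNP-pseudo-reflections} for item (1) and Theorem \ref{thm-main} for item (2). Your bookkeeping is in fact more careful than the paper's: you absorb the central $z_k$ into the ground field, note that one then needs the CNP/NNP over $\k(z)$ rather than over $\k$, and correctly land on $F_n(\k(z))=\mathcal F(A_n(\k)\otimes\k[z_1,\dots,z_m])$, whose center is $\k(z)$, rather than on $F_{n+m}(\k)$.

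However, the step you yourself flagged as the crux and then asserted without proof is false as stated, and this is a genuine gap (one which the paper's two-line proof shares). Under the identification $x_i\mapsto\sigma_i=\varepsilon_i^{-1}$, $\partial_i\mapsto t_i\sigma_i^{-1}$, the group $G$ acts on $\mathcal M$ by conjugation: if $g$ acts on the $t_i$ by a matrix $A$, then $g\varepsilon_i g^{-1}$ is the shift by the $i$-th column of $A^{-1}$ (in particular, for the cross-product action to exist at all, $G$ must preserve the lattice of integer shifts, i.e.\ lie in $\GL_n(\mathbb{Z})$). Consequently the transported action sends $x_i=\sigma_i$ to the \emph{monomial} $\prod_j x_j^{(A^{-1})_{ji}}$, not to a linear form in the $x_j$'s; it agrees with the standard linear action on $A_n(\k(z))$ only when $g$ is a permutation matrix. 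Concretely, for $n=1$, $G=\{\pm 1\}$, $s(t)=-t$: the transported automorphism of $A_1[x^{-1}]$ is $x\mapsto x^{-1}$, $\partial\mapsto-\partial x^{2}$ (the functorial action of torus inversion on $D(\mathbb{G}_m)$), not $x\mapsto -x$, $\partial\mapsto-\partial$. So your chain of isomorphisms computes the invariant skew field of a monomial (torus) action, to which Theorem \ref{thm-main} and Theorem \ref{NNP-pseudo-reflections} --- statements about \emph{linear} actions --- do not apply as quoted; what is really needed is rationality of the multiplicative invariant field $\k(z)(x_1^{\pm1},\dots,x_n^{\pm1})^G$ over $\k(z)$ (true for reflection subgroups of $\GL_n(\mathbb{Z})$, e.g.\ by Lorenz's theorem that multiplicative invariants of reflection groups are semigroup algebras) together with a torus analogue of Lemma \ref{lemma-MAIN}. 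A second, smaller soft spot: in (2) you read ``$L^G\simeq L$'' as rationality of $\k(z)(t_1,\dots,t_n)^G$ \emph{over} $\k(z)$; if the hypothesis is only an isomorphism of fields over $\k$, it gives just stable rationality of $\k(t_1,\dots,t_n)^G$, which is strictly weaker than what your appeal to Theorem \ref{thm-main} over $\k(z)$ requires.
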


\begin{proof}
Indeed,we have  an embedding of the Weyl algebra  $A_n(\k)$ to $  \k[t_1, \ldots, t_n]*\mathbb Z^n$ and their skew fields of fractions are isomorphic. 
Hence, item (1) follows from Theorem \ref{NNP-pseudo-reflections}.  If $L^G\simeq L$ then the CNP holds and (2) follows from Theorem \ref{thm-main}.

\end{proof}

We finish with the following problems:

\

\noindent{ \bf Problem}: Find an example of a linear action of a finite group such that the CNP does not hold but the NNP holds.
Find an example of a linear action of a finite group for which the NNP fails. 

\

\end{document}